\theoremstyle{plain}
\newtheorem{lemma}{Lemma}[section]
\newtheorem{theorem}[lemma]{Theorem}
\newtheorem{proposition}[lemma]{Proposition}
\newtheorem{prop}[lemma]{Proposition}
\newtheorem*{theorem*}{Theorem}
\newtheorem*{shred}{Shredding Lemma}
\newtheorem*{conjecture}{Conjecture}
\newtheorem{definition}[lemma]{Definition}
\theoremstyle{remark}
\newtheorem{remark}[lemma]{Remark}
\def\RR{\mathbb{R}}
\def\NN{\mathbb{N}}
\def\R{\RR}
\def\eps{\epsilon}
\def\CB{\mathcal{B}}
\def\cR{\mathcal{R}}
\def\cC{\mathcal{C}}
\def\cD{\mathcal{D}}
\def\cU{\mathcal{U}}
\def\cB{\mathcal{B}}
\def\cW{\mathcal{W}}
\def\cO{\mathcal{O}}
\def\cA{\mathcal{A}}
\def\cM{\mathcal{M}}
\def\cI{\mathcal{I}}
\def\cJ{\mathcal{J}}
\def\mfm{\mathcal{M}_f(M)}
\def\mm{\mathcal{M}(M)}
\def\diam{\operatorname{diam}}
\def\interior{\operatorname{Int}}
\def\M{\mathcal{M}}
\def\Hom{\operatorname{Homeo}}
\def\id{\operatorname{id}}
\title{Ergodic theory of generic continuous maps}
\author{Flavio Abdenur \footnote{Partially supported by a CNPq/Brazil productivity-in-research (PQ) grant and by a FAPERJ/"Junior Rio de Janeiro State Scientist" fellowship}, Martin Andersson\footnote{With financial support from Fondation Sciences Math\`ematiques \`a Paris (France) and FAPERJ (Brazil).} }
\begin{document}

\maketitle

\begin{abstract}

We study the ergodic properties of generic continuous dynamical systems on compact manifolds. As a main result we prove that generic homeomorphisms have convergent Birkhoff averages under continuous observables at Lebesgue almost every point. In spite of this, when the underlying manifold has dimension greater than one, generic homeomorphisms have no physical measures --- a somewhat strange result which stands in sharp contrast to current trends in generic differentiable dynamics. Similar results hold for generic continuous maps. 

To further explore the mysterious behaviour of $C^0$ generic dynamics, we also study the ergodic properties of continuous maps which are conjugated to expanding circle maps. In this context, generic maps have \emph{divergent} Birkhoff averages along orbits starting from Lebesgue almost every point. 

\bigskip

\noindent
{\bf Keywords:} ergodic theory, physical measures, genericity, circle homeomorphisms.

\medskip

\noindent {\bf MSC 2000:} 37A99.

\end{abstract}

\section{Introduction}

One of the best-known results in ergodic theory, due to Ulam-Oxtoby \cite{MR0005803}, which in fact gave birth to Baire Category arguments in dynamics, states that generic volume preserving homeomorphisms on a compact manifold are ergodic. Although seven decades have passed, a dissipative (i.e., non-volume preserving) analogue of their theorem has still not appeared. This is not because of a lack of interest in the dynamics of generic homeomorphisms, which is in fact a very active area of research in dynamical systems, treated extensively in the works of Alpern and Prasad \cite{MR1826331} and Akin et. al. \cite{MR1980335}. While Alpern and Prasad consider the ergodic theory of generic volume preserving homeomorphisms, Akin et. al. study topological properties in the generic dissipative case. We blend the two approaches and consider ergodic properties of generic dissipative homeomorphisms. In order to do that, we must first decide what we consider to be the appropriate questions to ask. In the differentiable setting it has long been more or less clear what these should be: one should ask whether a generic diffeomorphism has some (possibly many) \emph{physical measures} capturing the statistical behaviour of most orbits. We have found that applying the same type of questions to generic homeomorphisms leads to fashinating insights.

Physical measures have been much in vogue ever since they were introduced by Sinai, Ruelle, and Bowen in  the 70's and shown  to exist for every $C^2$ Axiom A diffeomorphism \cite{MR0415683}. No robust obstacle to the existence of physical measures is known in differentiable dynamics, which is quite generally believed to be a $C^r$ dense phenomenon \cite{MR1755446}. Some doubt, however, has been expressed by Ruelle \cite{MR1858471}. He seriously considers the possibility of some robust mechanism that provides \emph{historical behaviour} --- his term for the lack of convergence of Birkhoff averages on a set of positive Lebesgue measure (a phenomenon beautifully illustrated in a famous example due to Bowen). 

The current work fills the vacuum left after the result of Ulam-Oxtoby by proving that, in the context of generic homeomorphisms, no historical behaviour exists. 

\begin{theorem*}
For a generic homeomorphism $f$ of any compact manifold $M$, the Birkhoff averages
\[ \frac{1}{n} \sum_{k=0}^{n-1} \varphi(f^k(x)) \]
of every continuous $\varphi: M \rightarrow \mathbb{R}$ are convergent Lebesgue almost everywhere.
\end{theorem*}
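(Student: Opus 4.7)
First I would reduce to proving that, for each $\varphi$ in a countable uniformly dense subset $\{\varphi_j\} \subset C(M)$ and each pair of positive rationals $\delta,\epsilon$, the set
\[
R_{j,\delta,\epsilon} := \bigl\{f \in \operatorname{Homeo}(M) : \operatorname{Leb}\bigl(D_\delta^{\varphi_j}(f)\bigr) < \epsilon\bigr\}
\]
contains an open dense subset of $\operatorname{Homeo}(M)$, where $S_n^f(x) := \tfrac{1}{n}\sum_{k=0}^{n-1}\varphi(f^k(x))$ and $D_\delta^\varphi(f) := \{x \in M : \limsup_n S_n^f(x) - \liminf_n S_n^f(x) > \delta\}$. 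The Baire theorem then yields a residual set of $f$ on which $\operatorname{Leb}(D_\delta^{\varphi_j}(f)) = 0$ for every $j$ and every $\delta$, so the Birkhoff averages of each $\varphi_j$ converge Lebesgue almost everywhere. Uniform density of $\{\varphi_j\}$, combined with the pointwise bound $\limsup_n S_n^\varphi - \liminf_n S_n^\varphi \le 2\|\varphi - \varphi_j\|_\infty$, then promotes this to every $\varphi \in C(M)$.

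For density of $R_{j,\delta,\epsilon}$, given any $f$ I perturb it in $C^0$ to a homeomorphism $g$ with the following Morse--Smale-like structure: finitely many attracting periodic orbits $p_1,\ldots,p_s$ of periods $n_1,\ldots,n_s$, each sitting inside a trapping neighborhood $K_i = \bigsqcup_{l=0}^{n_i-1} K_{i,l}$ of disjoint closed topological balls $K_{i,l}$ around $g^l(p_i)$ cyclically permuted by $g$ (so $g(K_{i,l}) \subset \operatorname{int}(K_{i,l+1 \bmod n_i})$), chosen small enough that $\varphi_j$ oscillates by less than $\delta/4$ on each $K_{i,l}$, and with the union of basins $W := \bigcup_i \bigcup_n g^{-n}(K_i)$ satisfying $\operatorname{Leb}(M \setminus W) < \epsilon/2$. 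This is a dissipative $C^0$ variant of Lax's periodic approximation, constructed by partitioning almost all of $M$ into small tubes and perturbing $f$ to transport them cyclically toward finitely many designated sink orbits. On $W$, every $g$-orbit is eventually trapped in some $K_i$ and cycles through its components, so its Birkhoff averages converge to within $\delta/4$ of $\tfrac{1}{n_i}\sum_{l=0}^{n_i-1}\varphi_j(g^l(p_i))$; hence $D_\delta^{\varphi_j}(g) \subset M \setminus W$ and $g \in R_{j,\delta,\epsilon}$.

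For openness around such a $g$, the cyclic trapping condition $g(K_{i,l}) \subset \operatorname{int}(K_{i,l+1})$ is stable under $C^0$ perturbation, so any $h$ sufficiently close to $g$ inherits the same cyclic trapping. Every $h$-orbit entering $K_i$ therefore visits its components in the same cyclic order, and since $\varphi_j$ varies by at most $\delta/4$ on each $K_{i,l}$, its Birkhoff averages oscillate by at most $\delta/2$ asymptotically, placing the orbit outside $D_\delta^{\varphi_j}(h)$. Arranging (via a harmless auxiliary perturbation of $g$ if needed) that each $\partial g^{-n}(K_i)$ is Lebesgue-null, one has $\operatorname{Leb}(h^{-n}(K_i)) \to \operatorname{Leb}(g^{-n}(K_i))$ as $h \to g$ (the inverse iterates converge uniformly because $g^{-1}$ is uniformly continuous). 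Truncating at some large $N$ with $\operatorname{Leb}(W \setminus \bigcup_{n \le N} g^{-n}(K)) < \epsilon/4$, one concludes that the basin of $\bigcup_i K_i$ under $h$ has Lebesgue complement below $\epsilon$ for all $h$ in a small $C^0$-neighborhood of $g$, so $R_{j,\delta,\epsilon}$ contains that neighborhood.

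The principal obstacle is the density step: constructing the sink-dominated approximation $g$, a dissipative $C^0$ analogue of the periodic approximation theorems of Lax and Alpern--Prasad used in the volume-preserving setting. One must carry out the perturbation so that $g$ is globally a homeomorphism $C^0$-close to $f$, each tube actually feeds into its designated sink, and the residual Lebesgue measure is controlled; the rest of the argument reduces to the soft persistence of topological trapping regions under $C^0$ perturbation and routine Baire category manipulation.
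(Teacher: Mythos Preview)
Your outline is essentially the paper's approach: build, $C^0$-densely, homeomorphisms with finitely many cyclic trapping regions of small diameter whose basins carry most of the Lebesgue measure, observe that this structure is $C^0$-open, and intersect over a countable family of parameters. The paper packages the density step as its \emph{Shredding Lemma}, carried out by an explicit triangulation/pizza-slice/tunnelling construction rather than by invoking a Lax--Alpern--Prasad analogue; this is exactly the ``principal obstacle'' you identify.

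One point where the paper is cleaner than your sketch: for openness you track the Lebesgue measure of the basins $\bigcup_{n\le N} h^{-n}(K_i)$ and need $m(g^{-n}(\partial K_i))=0$, which is not automatic for a homeomorphism $g$ and must be arranged. The paper avoids this entirely by building, already at the density step, a \emph{fixed} open trapping region $U$ with $m(U)>1-\epsilon$ and $\overline{U}\subset\bigcup_{n\le N} g^{-n}(\operatorname{int}K)$; both conditions involve only finitely many iterates on fixed sets, so they are manifestly $C^0$-open and the measure $m(U)$ does not move with $h$. If you incorporate this device, your argument goes through without the null-boundary maneuver.
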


We also reveal a surprising cenario, very different from that expected in the differentiable setting: although Birkhoff averages exist Lebesgue almost everywhere, a generic homeomorphism has no physical measure (except for the very special case where the underlying manifold is the circle). The lack of physical measures in this context is not at all due to historical behaviour, but to an entirely different phenomenon, unconceivable in the context of generic differentiable dynamics. It is the same phenomenon that appears in the differentiable world in very rigid situations such as the identity map and rational translations of the torus: Birkoff averages exist, but any two points do (Lebesgue) almost surely have different ones. 

Most of the research behind this paper took place at PUC-Rio, ENS de Paris, Université Paris 13, and UFF-Niter\'oi. We express our gratitude to our anonymous referees whose advice has greatly simplified some of our proofs, and to Artur Avila for some discussion on pizza slice dynamics.

\section{Some background}

As mentioned in the introduction, the theory of $C^0$-generic dynamics has been extensively studied from the topological viewpoint,  mainly by M. Hurley and extended in the book \cite{MR1980335}. It is known that generic continuous maps (and in particular generic homeomorphisms) of sufficiently regular topological spaces -- say compact manifolds -- have highly complicated and even pathological dynamics from the topological point of view. For instance, for generic homeomorphisms (i) the nonwandering set is a Cantor set \cite{MR1980335} which contains a dense subset of periodic points \cite{MR1307531}, (ii) any Baire-residual point in the manifold has an adding machine as both its omega- and alpha-limit sets \cite{MR1980335}, and (iii) there are infinitely many topologically repelling sets and infinitely many topologically attracting sets, infinitely nested within each other: every attractor contains repellors, and vice-versa \cite{MR1424400}.

In the introduction to their delightful book, Hurley et al ask whether their results admit ergodic analogues: ``the question of whether something analogous to our results can be obtained in the measure theoretic category is an open one'' (page 1). They later discuss how to approach this question, and suggest the use of Lebesgue measure: ``while there are difficulties in finding an appropriate measure on the space of homeomorphisms, at least on a manifold Lebesgue measure is certainly appropriate in the context of questions on the behavior of ``most points'''' (page 5).

In this paper we follow their suggestion: instead of looking at topologically dynamical properties of a Baire-residual set of points, we examine ergodic-theoretic properties of a full-Lebesgue-measure set of points. We show in various contexts that the dynamics of generic continuous maps are indeed also very pathological (\emph{weird} or \emph{wacky} or, sometimes, even \emph{wicked} -- see Definition \ref{definitions_weird} below) when viewed from this ergodic perspective.

\section{The Results}

Before we state our results we shall provide some vocabulary and notation. We begin with notation for the spaces we work in and an explanation of  what is meant by ``generic''. 

Throughout this paper $M$ denotes a compact connected boundaryless smooth manifold.\footnote{See remark \ref{remark on Lebesgue}.} We denote by $C^0(M)$ the space of all continuous maps from $M$ to itself and by 
$\Hom(M)$ the space of all homeomorphisms of $M$ to itself. Both of these spaces are endowed with the usual $C^0$ metric
\begin{equation}
d_{C^0}(f,g) = \sup_{x\in M} d(f(x),g(x)).
\end{equation}

In so doing, the space $C^0(M)$ becomes a complete metric space whereas $\Hom(M)$ does not. However, there is another metric on $\Hom(M)$, defined by
\begin{equation}
d_{\text{Hom}}(f,g) = d_{C^0}(f,g) + d_{C^0}(f^{-1},g^{-1}).
\end{equation} 

This metric is complete and generates the same topology as $d_{C^0}$ does --- the $C^0$ topology. In practice we shall only use the metric $d_{C^0}$, simply denoted by $d$.

\smallskip

A subset $\cR$ of a topological space $X$ is \emph{residual} if it contain the intersection $\bigcap_{k \in \NN} V_k$ of a countable family of open-and-dense subsets $V_k$ of $X$. A topological space $X$ is a \emph{Baire space} if every residual subset of $X$ is dense in $X$. By the Baire Category Theorem, every complete metric space is Baire. In particular the spaces $C^0(M)$ and $\Hom(M)$ are Baire with respect to the $C^0$ topology.

\smallskip
\begin{definition}
A property $\mathcal{P}$ is said to be \emph{generic} in the space $X$ if there exists a residual subset $\cR$ of $X$ such that every element $p \in \cR$ satisfies property $\mathcal{P}$.
\end{definition}
 Note that, given a countable family of generic properties $\mathcal{P}_1, \mathcal{P}_2, \ldots$,  all of the properties $\mathcal{P}_i$ are \emph{simultaneously} generic in $X$. This is because the family of residual sets is closed under countable intersections.

Now for some ergodic definitions and notation. By ``measure'' we always refer to non-signed measures defined on the Borel $\sigma$-algebra of the ambient manifold $M$. We denote by $\mm$ the set of all probability measures (more succintly referred to as "probabilities") on $M$, and by $\mfm$ the set of all $f$-invariant probabilities on $M$. Both of these spaces are endowed with the usual weak* topology, turning them into compact metrizable spaces. We fix and denote by $m$ a volume probability on $M$ which we simply refer to as ``Lebesgue measure'' (see remark \ref{remark on Lebesgue} regarding the relevance of volume measures).

\smallskip

Given a continuous dynamical system $f: M \to M$ and a point $x \in M$, the \emph{Birkhoff limit} of the point $x$, when it exists, is given by the probability measure $\mu_x = \lim_{n \to \infty} \; \frac{1}{n} \sum_{j=1}^{n} \delta_{f^j(x)}$, where $\delta_y$ denotes the one-point Dirac probability supported on $y$ and the limit is taken in the weak* topology. When this limit exists, it is a fortiori an $f$-invariant probability. The Birkhoff limit $\mu_x$ is characterized by the following condition: given any continuous function $\varphi: M \rightarrow \R$, the average $\lim_{n \to \infty} \; \frac{1}{n} \sum_{j=1}^{n} \varphi(f^j(x))$ coincides with the integral $\int_M \varphi \; d\mu_x$.

A map $f$ is \emph{totally singular} (with respect to Lebesgue measure) if there is a (Borel) measurable set $\Lambda$ such that $m(\Lambda) = 1$ and $m(f^{-1}(\Lambda)) = 0$.

\begin{definition}\label{physical}
Given a probability measure $\mu$ on $M$ we define its basin to be the set  
\[B(\mu) = \{x \in M : \mu_x \text{ \emph{is well defined and coincides with} } \mu \}.\] 
A probability $\mu$ is called a physical measure if $B(\mu)$ has positive Lebesgue measure.
\end{definition}

\begin{remark}
Only invariant measures can have non-empty basin. In particular, every physical measure is invariant. It is not true, however, that physical measures are necessarily ergodic (see \cite{MR2373211} for a counterexample).
\end{remark}

\smallskip

Given a periodic point $p$ of period $k$, its orbit supports a unique invariant probability, the \emph{periodic Dirac measure}, given by $\mu_p = \frac{1}{k} \sum_{j=0}^{k-1} \delta_{f^j(p)}$. Note that this measure coincides with the Birkhoff limit of $p$, so that there is no ambiguity in the notation we employ.

\smallskip

We recall that, given a probability $\mu$ on $M$, its \emph{push-forward} under $f$ is the probability $f_*\mu$ defined by $f_*\mu(A) = \mu(f^{-1}(A))$ for every Borel measurable set $A \subset M$.

Finally, some new vocabulary regarding ergodically pathological (or well-behaved) dynamics:

\begin{definition}\label{definitions_weird}
A dynamical system $f: M \to M$ is said to be

\begin{itemize}
\item[w1)]
\emph{wonderful} if there exists a finite or countable family of physical measures $\mu_n$ such that $$m \left(\bigcup_n \; B(\mu_n) \right) = 1;$$

\item[w2)]
\emph{wholesome} if $m$-a.e. point $x$ has a well-defined Birkhoff limit $\mu_x$;

\item[w3)]
\emph{weird} if $m$-a.e. point $x$ has a well-defined Birkhoff limit $\mu_x$, but $f$ is totally singular and moreover admits no physical measure;

\item[w4)]
\emph{wacky} if $m$-a-e. point $x$ does \emph{not} have a well-defined Birkhoff limit; and

\item[w5)]
\emph{wicked} if $f$ is not uniquely ergodic and moreover the averages 
\begin{equation*}
m_n = \frac{1}{n} \sum_{k=1}^{n} f_*^k m
\end{equation*} 
accumulate on the whole space of $f$-invariant measures: that is, if $$\mfm = \bigcap_{n \geq 0}  \overline{\bigcup_{k \geq n} m_k}.$$
\end{itemize}
\end{definition}

The five conditions above are set out in a roughly well-behaved-to-pathological order. Some implications between them are obvious: for example, every wonderful system is wholesome; a wacky system cannot be wholesome, nor can it admit any physical measures; and every weird system is wholesome but not wonderful. It is not obvious, yet true, that every wicked system is wacky. In Section \ref{sectionwonderful} we discuss further these conditions and the various implications between them.

\begin{remark}
We exclude uniquely ergodic systems from the wicked ones because otherwise every uniquely ergodic system would be both wicked and wonderful --- which would sound great in a Lewis Carroll novel but impair our attempt to set the definitions in a roughly well-behaved-to-pathological order.
\end{remark}

\begin{theorem} \label{theoA}
Let $\cC$ denote either (i) the space $C^0(M)$ of all continuous maps from $M$ to itself, where $\dim(M)$ is arbitrary, or (ii) the space $\Hom(M)$ of all homeomorphisms from $M$ to itself, where $\dim M \geq 2$ (i.e., $M$ is not the circle). Then there is a residual set $\cR \subset \cC$ such that every $f \in \cR$ is weird.
\end{theorem}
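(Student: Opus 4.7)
The plan is to exhibit a single residual subset $\cR\subset\cC$ whose members are all weird, by writing $\cR$ as the intersection of countably many open-dense sets. Since countable intersections of residuals remain residual, it suffices to find, for each scale $n$, an open-dense class $\cU_n\subset\cC$ whose joint effect forces the three defining properties of weirdness.

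The backbone is a perturbation result---the \emph{Shredding Lemma} foreshadowed in the preamble. Informally: given $f\in\cC$ and $\eps>0$, one can find $g$ with $d(f,g)<\eps$, a finite Borel partition $\{A_1,\dots,A_k\}$ of $M$ (mod $0$), and periodic points $p_i\in A_i$ of $g$, such that each $A_i$ is $g$-forward invariant with $\diam(A_i)<\eps$ and $m(A_i)<\eps$, the image $g(A_i)$ lies in a neighborhood of the orbit of $p_i$ of Lebesgue measure at most $\eps\cdot m(A_i)$, and the periodic Diracs $\mu_{p_i}$ are pairwise at weak* distance at least $3\eta_n$, for a preassigned sequence $\eta_n\to 0$. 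Define $\cU_n$ to be the set of $f$ admitting such a shredded structure at scale $1/n$; openness comes from $C^0$-stability of the trapping data (closed images staying near the $p_i$, partition cells remaining forward-invariant up to tiny error), and density is precisely the lemma. Set $\cR:=\bigcap_n \cU_n$.

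For $f\in\cR$ and any $x$ off a countable union of null sets, the scale-$n$ partition assigns $x$ to some cell $A_{i_n(x)}^{(n)}$; the trapping then forces the time-$N$ Birkhoff average of any continuous $\varphi$ at $x$ to approximate $\int\varphi\,d\mu_{p_{i_n(x)}^{(n)}}$ up to error $\eta_n$ once $N$ is large. Comparing successive scales shows the Birkhoff averages form a Cauchy net, so the limit $\mu_x$ exists---giving wholesomeness. For total singularity, set $K_n:=\bigcup_i \overline{g(A_i^{(n)})}$; then $m(K_n)<1/n$ and $m(f^{-1}(K_n))>1-1/n$, and a Borel--Cantelli manipulation on $\Lambda:=\liminf_n (M\setminus K_n)$ yields $m(\Lambda)=1$ with $m(f^{-1}(\Lambda))=0$. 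Absence of physical measures follows from the Dirac separation at each scale: for any invariant probability $\mu$ and $n$ sufficiently large, at most one index $i$ satisfies $d_{w^*}(\mu,\mu_{p_i^{(n)}})<\eta_n$, so $\{x:\mu_x=\mu\}\subset A_i^{(n)}$ has Lebesgue measure $\leq 1/n$, forcing $m(B(\mu))=0$.

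The main obstacle is the Shredding Lemma itself in the homeomorphism regime with $\dim M\geq 2$. In $C^0(M)$ one can simply collapse small open discs onto prescribed periodic orbits to produce strict forward-invariant traps, but a homeomorphism cannot send a set strictly inside itself, so one must settle for topological trapping compatible with invertibility. The remedy exploits the extra free dimension: inside each small ball one inserts an explicit Brouwer-type or ``spiralling-sink'' homeomorphism carrying a prescribed periodic orbit with an arbitrarily small topological attracting basin contained in the ball, and then interpolates back to $f$ across a thin collar. Tiling $M$ by such surgeries while simultaneously controlling $m$, diameters, image sizes, Dirac separation, and the overall $C^0$-distance is the technically demanding step, and is precisely the obstruction that makes the argument collapse on the circle, where the missing second dimension leaves no room to twist.
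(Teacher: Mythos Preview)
Your Shredding Lemma, as you state it, cannot hold. You require the cells $A_i$ to partition $M$ mod~$0$, to be $g$-forward invariant, \emph{and} to satisfy $\diam(A_i)<\eps$. But if $g(A_i)\subset A_i$ and $\diam(A_i)<\eps$, then $d(g(x),x)<\eps$ for every $x\in A_i$; since the $A_i$ cover $M$ up to a null set, continuity forces $d_{C^0}(g,\id)\leq\eps$. Thus your $g$ can only approximate maps that are already $\eps$-close to the identity, never an arbitrary $f\in\cC$, and the density of $\cU_n$ collapses. The paper's Shredding Lemma avoids this trap by \emph{not} asking the trapping regions $U_j$ to have small diameter; what is small is a \emph{periodic cycle} of open sets $W_j^1,\dots,W_j^{k_j}$, each of diameter $<\eps$, into whose common basin $\overline{U_j}$ falls. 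Wholesomeness then comes from the fact that every orbit in $U_j$ eventually enters and cycles through the $W_j^i$, not from remaining inside a single small set. In the homeomorphism case with $\dim M\geq 2$ this is realized by a ``pizza-slice'' subdivision of each simplex in a triangulation together with a ``tunneling'' perturbation that routes each slice into the matching slice of the image simplex---a global combinatorial construction rather than a local insertion of spiralling sinks.

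A second, smaller gap: your total-singularity argument applies Borel--Cantelli to $\Lambda=\liminf_n(M\setminus K_n)$ with $m(K_n)<1/n$, but $\sum 1/n=\infty$, so Borel--Cantelli yields nothing about $m(\limsup K_n)$ and you cannot conclude $m(\Lambda)=1$. The paper repairs this by working along the subsequence of scales $1/n^2$ (intersecting $\cC^{1/n^2}$ rather than $\cC^{1/n}$), which makes the relevant series summable. That fix is cheap; the diameter issue above is structural and forces a genuine reformulation of your lemma.
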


The behavior above shows that, from the point of view of view of Lebesgue measure, generic continuous maps (except for circle homeomorphisms) have very complicated global dynamics, but most individual orbits are quite well-behaved; this nicely parallels the conclusion of Hurley et al's book \cite{MR1980335} regarding the behavior of orbits of Baire-residual points. In their words: ``the dynamics of a generic homeomorphism is geometrically complicated but the behavior of most orbits is quite stable'' (page $5$). 

\begin{remark}\label{remark on Lebesgue}
We feel that a comment on the smoothness of $M$ and the role of the Lebesgue measure is due at this point. Starting with the latter, the careful reader might have asked himself wether it is natural to use a volume measure as a reference when studying homeomorphisms since these do not respect the class of volume measures. On the contrary, generic maps and homeomorphisms are totally singular with respect to any volume measure. Indeed, there \emph{is} nothing special about volume measures. By conjugating the residual set $\mathcal{R}$ in Theorem $\ref{theoA}$ with any homeomorphism, it becomes clear that the statement remains true if the volume measure is replaced by any member of its homeomorphic class $[m] = \{ h_* m: h \in \Hom(M) \}$. By the Homeomorphic Measures Theorem, $[m]$ consists of precisely those measures which have no atoms and full support, the measure class referred to in \cite{MR1826331} as OU measures (from Oxtoby-Ulam). We may pick any measure in this class and call it "Lebesgue". Once we have abandoned the notion of volume measure, one may be justly sceptic about the hypothesis that $M$ be a smooth manifold. In fact, we chose to work with smooth manifolds because we know that these have triangulations (which are used in the proof of Theorem \ref{theoA}) and because we (the authors) do not have sufficient familiarity with non-smoothable manifolds to determine the most general structure necessary on $M$ for our proof to work.  One of our anonimous referees has sugested that the natural class to work with would be that of piecewise linear manifolds. We believe this to be correct, and express our gratitude for suggesting it. 

Another interesting thing to notice is that OU is generic in $\M(M)$ (see \cite{MR0457675}). It is therefore an exercise to see that Theorem \ref{theoA} can alternatively be formulated like this: For a generic pair $(f, \nu)$ in $\Hom(M) \times \M(M)$, $f$ is totally singular with respect to $\nu$ and the Birkhoff averages along orbits of $f$ under any continuous function converge $\nu$-almost everywhere. Moreover, $\nu(B(\mu))=0$ for every $\mu \in \M_f(M)$.
\end{remark}

The ergodic behavior of generic homeomorphisms of the circle is very different from the scenario of Theorem \ref{theoA}. Indeed, from an ergodic point of view, these are utterly well-behaved:

\begin{theorem}\label{theoB}
There is a residual set $\cR \subset \Hom(S^1)$ of all circle homeomorphisms such that every $f \in \cR$ is wonderful. Moreover, the set of physical measures is countably infinite and each physical measure is a periodic Dirac measure.
\end{theorem}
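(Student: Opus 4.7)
The plan is to identify a residual set $\cR \subset \Hom(S^1)$ characterized by three generic properties: (i) the rotation number $\rho(f) = p/q$ is rational (in lowest terms) and $f$ lies in the open interior $T_{p/q}$ of its Arnold tongue $\{g : \rho(g) = p/q\}$; (ii) the set $K(f) := \operatorname{Fix}(f^q)$ is a Cantor set; and (iii) $m(K(f)) = 0$. Property (i) holds on the open dense subset $\bigcup_{p/q \in \mathbb{Q}} T_{p/q}$ of $\Hom(S^1)$: openness is automatic, while density follows from the classical fact that any $f$ can be $C^0$-perturbed to some $g$ whose iterate $g^q$ has a transverse fixed point, placing $g$ in some $T_{p/q}$. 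Property (ii) is residual as a specialization of the classical theorem \cite{MR1980335} that the nonwandering set $\Omega(f)$ is a Cantor set for a generic homeomorphism, together with the observation that $\Omega(f) = K(f)$ whenever $\rho(f)$ is rational (since every point outside $K(f)$ is then wandering). For property (iii), the map $f \mapsto m(K(f))$ is upper semi-continuous (combining upper semi-continuity of $f \mapsto K(f)$ in the Hausdorff metric with outer regularity of Lebesgue measure), so each set $\{f : m(K(f)) < 1/n\}$ is open; density follows by perturbing any $f \in T_{p/q}$ to a smooth map with only finitely many periodic points of period $q$, easily achieved within $T_{p/q}$.

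Now fix $f \in \cR$ with $\rho(f) = p/q$. Since $K(f)$ is a Cantor set, the complement $S^1 \setminus K(f)$ decomposes as a countably infinite disjoint union of open arcs $\{I_j\}_{j \in \NN}$. On each $I_j$, the restriction $f^q|_{I_j}$ is a self-homeomorphism without fixed points and therefore strictly monotone, so every $x \in I_j$ satisfies $f^{qn}(x) \to a(I_j)$ as $n \to \infty$, where $a(I_j) \in K(f) \cap \partial I_j$ is the unique attracting endpoint of $I_j$. The $f$-orbit of $x$ cycles through the $q$ arcs $I_j, f(I_j), \ldots, f^{q-1}(I_j)$, converging within each to the corresponding element of the orbit of $a(I_j)$; hence the Birkhoff limit of $x$ is the periodic Dirac measure
\[
\mu_{a(I_j)} \;=\; \frac{1}{q} \sum_{k=0}^{q-1} \delta_{f^k(a(I_j))}.
\]

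Because $K(f)$ is a Cantor set, every one of its points is the endpoint of at most one complementary arc; consequently distinct $f$-orbits of complementary arcs give rise to distinct attracting periodic orbits (else two different arcs would share an endpoint in $K(f)$) and hence to distinct physical measures. Since the complementary arcs are countably infinite and each $f$-orbit of them has cardinality $q$, the set of physical measures is countably infinite, each a periodic Dirac measure. Their basins together exhaust $S^1 \setminus K(f)$, which has full Lebesgue measure by property (iii); therefore $f$ is wonderful, as claimed.

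I expect the main technical difficulty to lie in establishing property (ii) directly inside $T_{p/q}$ --- that is, without appealing to the general machinery of \cite{MR1980335} --- since proving that $K(f)$ generically has no isolated points requires carefully localized perturbations that create new transverse zeros of $f^q - \operatorname{id}$ arbitrarily close to any given fixed point while preserving the rotation number $p/q$.
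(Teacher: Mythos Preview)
Your proposal is correct and follows essentially the same route as the paper: the paper also establishes that generically the rotation number is rational and locally constant, that $\Omega(f)=\mathrm{Per}(f)$ is a Cantor set of zero Lebesgue measure, and then deduces wonderfulness by observing that each complementary arc is driven by $f^q$ toward one of its endpoints, so its points lie in the basin of a periodic Dirac measure. The only real difference is that the paper reproves the Cantor-set property directly on $S^1$ (its Steps~2 and~3) rather than citing \cite{MR1980335}, exactly the technical point you flag at the end; your treatment of the countable-infinitude of physical measures via the ``no shared endpoints'' argument is in fact somewhat more explicit than the paper's.
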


\begin{remark}

The collection of physical measures whose existence is assured by Theorem \ref{theoB} enjoy a peculiar form of robustness: Given any of the physical measures $\mu$  and any $\eps>0$ there is an open neighborhood $\cU$ of $f$ in $\Hom(S^1)$ such that if $g \in \cU$ then $g$ has a Dirac physical measure $\nu$ whose support is $\epsilon$-close to the support of $\mu$ in the Hausdorff topology on $M$. Moreover, the basins of $\mu$ and $\nu$ are close in the the sense that $m(B(\mu) \bigtriangleup B(\mu^g)) < \eps$. One is thus tempted to conclude that, when looked at individually, each physical measure of $f\in \cR$ admits a weak* continuation. However, continuation is not the right word in this context, for two reasons. Firstly, for a generic element of $\Hom(S^1)$, no physical measure is isolated, i.e. it is accumulated on by other physical measures, both in terms of its support and, consequenly (since they are all periodic Dirac measures of the same period), in the weak* topology. The other reason is that even if $f \in \Hom(S^1)$ has an isolated Dirac physical measure, e.g. as in the (non-generic) case of Morse-Smale diffeomorphisms, one can easily perturb $f$ in the $C^0$ topology to obtain a new homeomorphism with an arbitrarily large number of physical measures near it. 

The type of continuity that does hold, however, is that the weak* closure of the set of physical measures depends continuously on $f$ in the Hausdorff topology in $\mfm$ whenever $f \in \cR$. 

\end{remark}

Although the ergodic properties of the generic systems studied in Theorems \ref{theoA} and \ref{theoB} are radically different from the global viewpoint, from the point of view of individual orbits they are quite similar in that they are wholesome --- Lebesgue almost every orbit has a well defined Birkhoff limit. This is essentially due to an abundance, in both of these contexts, of \emph{trapping regions}: open regions which are mapped strictly into their own interiors. We believe that the abundance of trapping regions is essentially equivalent to weirdness, and that in their absence wickedness will prevail. By a famous theorem of Conley \cite{MR511133}, sometimes referred to as the fundamental theorem of dynamical systems (see \cite{MR1366526} for the discrete non-invertible case), a continuous map has no trapping region if and only if it is chain recurrent.

\begin{conjecture}
 Let $\mathcal{C}$ be either (i) the set of chain recurrent continuous maps on $M$, with $M$ of any dimension, or (ii) the set of chain recurrent homeomorphisms on $M$, with $M$ of dimension at least $2$. Then a generic element of $\mathcal{C}$ is wicked. (Note that, in either case, $\mathcal{C}$ is a Baire space since it is a closed subset of the complete metric space $C^0(M)$ or $\Hom(M)$ respectively.)
\end{conjecture}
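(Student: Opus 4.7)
The plan is to combine a Baire-category scheme with a perturbation construction that exploits the absence of trapping regions. Fix a countable dense sequence $(\mu_j)_{j\in\mathbb{N}}$ in $\mathcal{M}(M)$, and for each triple $(j,k,N)\in\mathbb{N}^3$ set
\[
V_{j,k,N} \;=\; \bigl\{ f \in \mathcal{C} : \exists\, n \geq N,\ d(m_n(f),\mu_j) < 1/k \bigr\} \cup \bigl\{ f \in \mathcal{C} : d(\mu_j,\mathcal{M}_f(M)) > 1/k \bigr\}.
\]
The first set is open because $f \mapsto m_n(f)$ is continuous in the $C^0$ topology for each fixed $n$, and the second is open by the standard upper semicontinuity of $f \mapsto \mathcal{M}_f(M)$ in the Hausdorff metric. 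An elementary verification shows that any $f \in \bigcap_{j,k,N} V_{j,k,N}$ satisfies $\bigcap_{N} \overline{\{m_n(f) : n \geq N\}} \supseteq \mathcal{M}_f(M)$, while the opposite inclusion is automatic since every weak-$*$ limit of $(m_n(f))$ is $f$-invariant. The non-uniquely-ergodic requirement in the definition of wickedness will be handled by a separate residual condition: a generic $f \in \mathcal{C}$ admits infinitely many periodic orbits of distinct periods, produced by a $C^0$ closing-and-creation argument carried out inside $\mathcal{C}$ itself.

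The heart of the proof is thus showing that each $V_{j,k,N}$ is dense in $\mathcal{C}$. Given $f_0 \in \mathcal{C}$ and $\delta > 0$, we either have $d(\mu_j,\mathcal{M}_{f_0}(M)) > 1/k$, in which case $f_0$ already lies in the second open set and there is nothing to do, or there exists an $f_0$-invariant measure $\nu$ with $d(\nu,\mu_j) \leq 1/k$. In the latter case, we first apply a $C^0$ closing lemma (valid because every point of $M$ is chain recurrent for $f_0$) to $\delta/2$-perturb $f_0$ into $f_1 \in \mathcal{C}$ admitting periodic orbits $\mathcal{O}_i = \{p_i, f_1(p_i), \dots\}$ whose associated periodic Dirac measures $\mu_{p_i}$ furnish a convex combination $\sum_i \alpha_i \mu_{p_i}$ lying within arbitrarily small weak-$*$ distance of $\nu$. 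It then remains to produce a further $\delta/2$-perturbation $f \in \mathcal{C}$ of $f_1$ and an integer $n \geq N$ with $m_n(f)$ within $1/k$ of $\sum_i \alpha_i \mu_{p_i}$.

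The construction we envision is a \emph{finite-time funnel}: carve thin tubular neighborhoods $U_i$ around the orbits $\mathcal{O}_i$ and redefine $f$ inside them to act as a very slow local sink whose capture basins together absorb proportions approximately $\alpha_i$ of the ambient Lebesgue mass over a long but finite timescale $n_1 \geq N$. During this timescale the iterated measure $f_*^n m$ is tightly concentrated near the $\mathcal{O}_i$ with the prescribed weights, making $m_{n_1}(f)$ close to $\sum_i \alpha_i \mu_{p_i}$. To preserve chain recurrence the funnel is coupled with a slow \emph{release} mechanism: after time $n_1$, the captured mass leaks out of the $U_i$ through controlled channels and redistributes throughout $M$, restoring the chain recurrent structure globally. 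In dimension at least $2$, the $C^0$ flexibility of maps and homeomorphisms gives ample room to interleave slow contractions along small tubes with compensating expansions that route the escaped mass back through the chain components. The main obstacle, and the reason the statement remains a conjecture, is precisely this perturbation lemma: one must convert the local perturbation machinery underlying Theorem \ref{theoA} (which freely creates trapping regions and honest attractors) into a variant that realizes any prescribed finite-time empirical statistic while globally forbidding trapping regions. The absence of trapping regions both \emph{enables} the construction (mass must be free to circulate through $M$) and \emph{complicates} it (one may not create an honest attractor, lest $m_n(f)$ collapse onto it and cease to accumulate elsewhere in $\mathcal{M}_f(M)$).
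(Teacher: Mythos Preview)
The statement you are attempting to prove is explicitly labeled a \emph{Conjecture} in the paper and is not proved there; the paper only establishes the special case of maps conjugate to linear expanding circle maps (Theorem~\ref{theoC}). So there is no ``paper's own proof'' to compare your argument against.

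Your outline is a reasonable blueprint for attacking the conjecture, and you are honest that the crux --- the finite-time funnel with release mechanism --- is precisely the missing perturbation lemma. A few remarks are in order. First, the Baire setup with the sets $V_{j,k,N}$ is sound, and your handling of the $f$-dependence of $\mathcal{M}_f(M)$ via the second open set is correct. Second, the step where you invoke a $C^0$ closing lemma \emph{inside $\mathcal{C}$} needs care: the standard $C^0$ closing procedure may create a trapping region and thus throw you out of the chain recurrent class, so even this preliminary step would need a chain-recurrence-preserving version. Third, and most seriously, your funnel-plus-release idea is really two competing requirements pulling in opposite directions: to make $m_n(f)$ close to the target you need the sink to hold almost all Lebesgue mass for almost all of the first $n$ iterates, yet the absence of any trapping region means mass must be able to leak out at every scale. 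Controlling the rate of leakage so that it is negligible up to time $n$ but nonzero thereafter, uniformly over a $C^0$-open set of maps, is exactly the unresolved difficulty.

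It is worth contrasting this with what the paper actually does in the special case. For $CE(S^1)$ the authors do \emph{not} perturb the dynamics at all: they fix $E=E_\ell$ and perturb only the conjugating homeomorphism $h$, which amounts to perturbing the reference measure $h_*m$ rather than the map. The symbolic structure of $E$ then lets them redistribute mass among cylinder sets so that $E_*^k h_*m$ equals a prescribed invariant measure on a coarse partition for a long range of $k$. That trick has no obvious analogue in the general chain-recurrent setting, where there is no fixed model map and no symbolic coding to exploit; your funnel idea is genuinely different and, if it could be made rigorous, would be more robust --- but at present it remains heuristic.
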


Our third and final result points in this direction, proving the conjecture to be true in the context of circle maps which are conjugated to expanding ones. More precisely, given an integer $k$ with $\vert k \vert \geq 2$, let $E_k$ denote the linearly induced expanding circle map of degree $k$, i.e. the map $x \mapsto k x \mod 1$. 

Consider the sets
\[
CE_k(S^1) = \{h E_k h^{-1}: h \in \Hom_+(S^1)\}.
\]
Thus $CE_k(S^1)$ consists of all continuous circle maps which are topologically conjugated to $E_k$; hence to any $C^1$ map $f$ of degree $k$, having an iterate $f^n$ such that $\vert (f^n)'(x) \vert >1$ for all $x \in S^1$ --- the standard definition of expanding circle map. Likewise, the set 
\[
CE(S^1) = \bigcup_{\vert k \vert \geq 2} CE_k(S^1)
\]
consists of all continuous circle maps topologically conjugate to some expanding map. It becomes a topological space by considering it as a subspace of $C^0(S^1)$. As such, it is neither closed nor open. It is a nowhere dense set which, by any reasonable standard, should be considered extremely meager. Still, by Proposition \ref{locally homeomorphic}, $CE(S^1)$ is itself a Baire space, so it becomes relevant to ask what its generic properties are.

\begin{theorem}\label{theoC}
Generic elements of $CE(S^1)$ are wicked.
\end{theorem}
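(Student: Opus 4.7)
The plan is a Baire-category argument driven by a perturbation lemma which realizes any periodic Dirac measure of $E_k$ as an arbitrarily close accumulation point of the sequence $\{m_n^f\}$. Since the degree of a self-map is locally constant in the $C^0$-topology, the decomposition $CE(S^1)=\bigsqcup_{|k|\ge 2}CE_k(S^1)$ is clopen in $CE(S^1)$, so it suffices to prove the claim in each $CE_k(S^1)$. Non-unique ergodicity is automatic: every $f\in CE_k(S^1)$ is topologically conjugate to $E_k$, which has infinitely many ergodic invariant measures. Thus the real task is to show that, on a residual subset of $CE_k(S^1)$, $\mathcal{M}_f(S^1)\subset\overline{\{m_n\}_{n\ge 1}}$. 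Writing $f=hE_kh^{-1}$ with $h\in\Hom_+(S^1)$ and setting $\nu_h:=(h^{-1})_*m$, I compute $m_n=h_*\bar m_n(h)$ where $\bar m_n(h):=\frac{1}{n}\sum_{j=1}^n(E_k^j)_*\nu_h$. Since $h_*$ is a weak-$*$ homeomorphism sending $\mathcal{M}_{E_k}(S^1)$ onto $\mathcal{M}_f(S^1)$, the goal becomes: for residual $h\in\Hom_+(S^1)$ (taken with the complete metric $d_{\Hom}$), the sequence $\{\bar m_n(h)\}$ is dense in $\mathcal{M}_{E_k}(S^1)$.

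Fix a countable family $\{\mu_j^*\}$ of periodic Dirac measures dense in $\mathcal{M}_{E_k}(S^1)$ (standard for expanding maps). For each $j,r,N$ define
\[
V_{j,r,N}:=\{h\in\Hom_+(S^1):\exists\,n\ge N,\ d_W(\bar m_n(h),\mu_j^*)<1/r\},
\]
where $d_W$ is any metric compatible with the weak-$*$ topology. Each $V_{j,r,N}$ is open because $h\mapsto\bar m_n(h)$ is continuous in $d_{\Hom}$ for every fixed $n$. If each $V_{j,r,N}$ is also dense, then $\bigcap_{j,r,N}V_{j,r,N}$ is residual in $\Hom_+(S^1)$, and for any $h$ in it every $\mu_j^*$ is an accumulation point of $\{\bar m_n(h)\}$, giving density in $\mathcal{M}_{E_k}(S^1)$. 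Residuality then descends to $CE_k(S^1)$ through the continuous surjection $h\mapsto hE_kh^{-1}$, whose fibres are the finite cosets of the centralizer $Z\cong\mathbb{Z}/(k-1)\mathbb{Z}$ of $E_k$.

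The density of $V_{j,r,N}$ is the content of the following perturbation lemma: given $h_0\in\Hom_+(S^1)$, a periodic Dirac measure $\mu_{p^*}=\frac{1}{\ell}\sum_{j=0}^{\ell-1}\delta_{E_k^jp^*}$, $\epsilon>0$, and $N$, there exist $h$ with $d_{\Hom}(h,h_0)<\epsilon$ and $n\ge N$ with $d_W(\bar m_n(h),\mu_{p^*})<\epsilon$. To construct $h$: pick a multiple $n_0=K\ell\ge N$ large and then $L$ with $L/n_0<\epsilon/3$; partition $S^1$ into the $k^L$ intervals $\{I_\alpha\}$ of length $k^{-L}$ on which $E_k^L$ is injective onto $S^1$, each $I_\alpha$ containing a unique preimage $\tilde x_\alpha$ of $p^*$ under $E_k^L$. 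Define $\nu$ so that $\nu(I_\alpha)=\nu_0(I_\alpha)$ for every $\alpha$ (with $\nu_0:=(h_0^{-1})_*m$), but with $\nu|_{I_\alpha}$ concentrated, via a smooth positive density, on a sub-interval $J_\alpha\ni\tilde x_\alpha$ of length less than $\epsilon\,k^{-(n_0+L+1)}$; let $h$ be the unique orientation-preserving homeomorphism with $(h^{-1})_*m=\nu$. The cell-wise mass matching forces the cdfs of $\nu$ and $\nu_0$ to agree at every partition point, yielding $\|h-h_0\|_\infty\le\max_\alpha|I_\alpha|=k^{-L}$ and $\|h^{-1}-h_0^{-1}\|_\infty\le\max_\alpha\nu_0(I_\alpha)$; both are smaller than $\epsilon$ for $L$ large, the latter by non-atomicity of $\nu_0$. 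Meanwhile, $E_k^L(\tilde x_\alpha)=p^*$ and the minuscule diameter of $J_\alpha$ ensure $(E_k^{L+j})_*\nu$ is weak-$*$-within-$\epsilon$ of $\delta_{E_k^jp^*}$ uniformly in $\alpha$, for each $j=1,\dots,n_0$, so with $n=n_0+L$,
\[
\bar m_n(h) = O(L/n) + \tfrac{1}{n}\sum_{j=1}^{n_0}\delta_{E_k^jp^*} + O(\epsilon) = \mu_{p^*}+O(\epsilon).
\]

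The main obstacle is the tension between keeping $h$ close to $h_0$ — which forces $\nu_h$ to be weak-$*$-close to $\nu_0$ and naively seems to trap $\bar m_n(h)$ near $\bar m_n(h_0)$ — and the need to make $(E_k^j)_*\nu_h$ mimic a prescribed periodic orbit measure, which appears to demand $\nu_h$ be strongly concentrated at preimages of that orbit. The mass-preserving partition trick resolves this: the perturbation is conservative across cells, keeping $h$ close to $h_0$, but drastic within each cell, concentrating $\nu_h$ at the unique preimage of the target periodic point in each cell, which is precisely what turns the iterates into approximate periodic orbits after $L$ applications of $E_k$.
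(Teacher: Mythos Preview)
Your argument is correct and follows the same scheme as the paper: transfer the problem to $\Hom_+(S^1)$ via the finite-to-one local homeomorphism $h\mapsto hE_kh^{-1}$, run a Baire argument there, and prove the key density lemma by redistributing the mass of $\nu_h$ inside the level-$L$ cylinders while keeping the total mass of each cylinder fixed. The only differences are cosmetic: the paper targets an arbitrary $\mu\in V_i\cap\mathcal{M}_{E_k}(S^1)$ directly (spreading mass according to $\mu$'s cylinder statistics) rather than approximating through periodic Dirac measures, and your two sup-norm estimates are interchanged --- cell-wise mass matching actually gives $\|h-h_0\|_\infty\le\max_\alpha\nu_0(I_\alpha)$ and $\|h^{-1}-h_0^{-1}\|_\infty\le k^{-L}$, not the other way around, though both are small for large $L$ so this is harmless.
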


So in this context generic dynamics is even more pathological: iteration of Lebesgue measure completely ``deforms'' it. In fact, a simpler argument than the one employed in the proof of Theorem \ref{theoC} proves that, for a generic $f$ in $CE(S^1)$, the induced push-forward map $f_*$ is transitive on $\M(M)$, having $\{f_*^n m: m \geq 0 \}$ as a dense orbit. The proof of Theorem \ref{theoC} is slightly more involved because it has to deal with the extra difficulty of showing that, when $f_*^n m$ gets near an invariant measure $\nu$, it stays there long enough so that $\frac{1}{n}\sum_{k=0}^{n-1} f_*^k m$ gets near $\nu$. 

We end this section with a few remarks:

\begin{itemize}
\item
One of the central themes of ergodic theory is of course entropy. We remark that by \cite{MR579700} every $C^0$-generic homeomorphism (except of course for circle homeomorphisms) and every $C^0$-generic continuous map has infinite topological entropy.

\item Apart from the trivial case of Morse-Smale diffeomorphisms, very little about generic existence of physical measures is known in the $C^1$ topology. Campbell and Quas \cite{MR1845327} used an approach based on thermodynamic formalism to prove that a generic $C^1$ expanding circle map has a unique physical measure. Their argument was recently adapted to $C^1$ generic hyperbolic attractors \cite{MR2770016}. It was also shown in  \cite{MR2811152} that ``tame'' $C^1$-generic diffeomorphisms -- which include transitive ones -- exhibit a Baire-residual subset $S$ of $M$ such that the Birkhoff average $\mu_x$ exists for every $x \in S$: that is, $C^1$-generic tame diffeomorphisms are ``Baire-wholesome''. But as far as the authors know there are no results on the wholesomeness or wickedness of Lebesgue-a.e. point of the manifold in this context. 

For higher regularity ($r > 1$), it is a classical result that Anosov diffeomorphisms, or more generally, Axiom A diffeomorphisms with no cycles, are open sets of wonderful maps. The same holds for uniformly expanding maps. Being open, these sets intersect any residual set. Efforts have been made to enlarge these sets to certain classes of diffeomorphisms admitting dominated splitting, by assuming non-uniform contraction or expansion in one of the invariant directions (instead of uniform contraction and expansion, which is the case for Axiom A systems). See \cite{MR1749677, ABV, MR2574879}. There is a result due to Tsujii \cite{MR2231338} which states that a $C^r$ generic partially hyperbolic endomorphism on the 2-torus is wonderful, whenever $r \geq 19$. It is the most remarkable result in this direction in terms of technical sophistication. 

\item
Another interesting question, raised independently by Ch. Bonatti and by E. R. Pujals in private discussions, is whether $C^0$-\emph{densely} the dynamics is finitely wonderful (i.e., there is a finite set of physical measures the union of whose basins has full Lebesgue measure); this is a $C^0$-version of the Palis conjecture on finitude of attractors \cite{MR1755446}. A partial (positive) answer to this question in the context of homeomorphisms is given by combining results of Moise, Shub, and Sinai-Ruelle-Bowen: the ``$C^0$-Palis conjecture for homeomorphisms'' holds in dimensions $d = 1, 2, 3$, where homeomorphisms can always smoothed by $C^0$-perturbations into diffeomorphisms (see \cite{MR0488059}, which in turn can (by \cite{MR0307278}) be $C^0$-perturbed into structurally stable $C^{\infty}$ diffeomorphisms, which are finitely wonderful by \cite{MR2423393}). We note that in dimensions $d \geq 7$ there do exist non-smoothable homeomorphisms, by \cite{MR0082103}.

\end{itemize}

\section{Wonderful, wholesome, weird, wacky, wicked}\label{sectionwonderful}

In this section we first discuss the ``w'' conditions defined in Definition \ref{definitions_weird} and the implications among them, and then point out some examples.

\subsection{Implications}

Some of the implications among the five w's are immediate or trivial and indeed have already been mentioned in the Introduction. They are:

\begin{itemize}
\item
Every wonderful system is wholesome.

\item
Wacky system are not wholesome; moreover they admit no physical measures.

\item
Every weird system is wholesome but not wonderful.

\item
No weird system is wacky.

\end{itemize}

A less obvious implication is

\begin{prop}
Every wicked system is wacky.
\end{prop}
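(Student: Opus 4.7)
The plan is to argue by contradiction: supposing $f$ is wicked but $m(A)>0$, where $A=\{x\in M:\mu_x \text{ exists}\}$, I want to derive $m(A)\le 0$, forcing $m(M\setminus A)=1$ and hence wackiness. The first step is to split $m=m(A)\,m_A+m(B)\,m_B$ with $B=M\setminus A$, $m_A=m|_A/m(A)$, $m_B=m|_B/m(B)$, and observe that the Ces\`aro averages from the definition of wickedness decompose accordingly:
\[
m_n=m(A)\,q_n+m(B)\,p_n,\qquad q_n=\tfrac1n\sum_{k=1}^n f_*^k m_A,\quad p_n=\tfrac1n\sum_{k=1}^n f_*^k m_B.
\]

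Next I would show that $q_n$ must converge weak*. For $x\in A$ we have $\tfrac1n\sum_{j=1}^n\delta_{f^j(x)}\to\mu_x$ by definition; integrating any continuous $\varphi$ against $m_A$ and applying bounded convergence yields $\int\varphi\,dq_n\to\int_A\bigl(\int\varphi\,d\mu_x\bigr)\,dm_A(x)$, so $q_n\to\nu_A:=\int_A\mu_x\,dm_A(x)\in\mfm$ in the weak* topology (the averaged measure $\nu_A$ is automatically $f$-invariant because each $\mu_x$ is). Now invoke wickedness: for every $\nu\in\mfm$ there is a subsequence with $m_{n_k}\to\nu$. Since $q_{n_k}\to\nu_A$, the measures $m(B)\,p_{n_k}$ converge weak* to $\nu-m(A)\nu_A$, and because each $p_{n_k}$ is a probability, $\bigl|\int\varphi\,dp_{n_k}\bigr|\le\|\varphi\|_\infty$. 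Passing to the limit yields the key constraint
\[
\bigl|\,\textstyle\int\varphi\,d\nu-m(A)\int\varphi\,d\nu_A\,\bigr|\le (1-m(A))\,\|\varphi\|_\infty
\]
for every continuous $\varphi:M\to\RR$ and every $\nu\in\mfm$.

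To close, I would apply this inequality to two distinct ergodic invariant measures $\nu_1,\nu_2$, which exist because $f$ is not uniquely ergodic and are automatically mutually singular (distinct ergodic measures always are). Subtracting the two bounds gives $|\int\varphi\,d\nu_1-\int\varphi\,d\nu_2|\le 2(1-m(A))\|\varphi\|_\infty$ for every continuous $\varphi$. By inner regularity of $\nu_1,\nu_2$ on the compact manifold $M$ one can pick disjoint compact sets capturing most of each, and then Urysohn's lemma produces continuous $\varphi_\epsilon$ of sup-norm one with $\int\varphi_\epsilon\,d\nu_1\to 1$ and $\int\varphi_\epsilon\,d\nu_2\to -1$. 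This forces $2\le 2(1-m(A))$, contradicting $m(A)>0$.

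The argument is conceptually clean and no real obstacle stands in the way: wickedness demands that $m_n$ accumulate everywhere in $\mfm$, while any positive-$m$-measure set of Birkhoff-convergent points would pin a definite fraction of $m_n$ to the single limit $\nu_A$, and the two demands collide as soon as $\mfm$ contains more than one measure. The step that deserves most care is the passage from ``$|\int\varphi\,d\nu_1-\int\varphi\,d\nu_2|$ controlled uniformly over continuous $\varphi$ of sup-norm one'' to the total-variation bound $\|\nu_1-\nu_2\|_{TV}\le 2(1-m(A))$; this is the Urysohn/regularity step, entirely standard but worth spelling out since it is what actually converts the continuous-test-function inequality into the numerical contradiction $2\le 2(1-m(A))$.
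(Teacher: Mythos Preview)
Your argument is correct and follows essentially the same route as the paper's proof: both decompose $m_n$ into a weak*-convergent piece coming from the set $A$ of Birkhoff-regular points and a bounded remainder, then use a Urysohn-type test function separating two distinct ergodic measures to force $m(A)\le 0$. The only cosmetic gap is that your decomposition $m=m(A)m_A+m(B)m_B$ tacitly assumes $m(B)>0$; the paper handles the case $m(A)=1$ separately (there $m_n\to\nu_A$ has a unique limit, contradicting wickedness directly), and you should too, but this is a one-line fix.
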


\begin{proof}
Suppose that $f$ is not wacky. That is, that there exists a set $A$ of positive Lebesgue measure such that for every $x\in A$, the Birkhoff limit $\mu_x = \lim_{n \rightarrow \infty} \frac{1}{n} \sum_{k=0}^{n-1} \delta_{f^k(x)}$ is well-defined. We shall prove that, in this case, $f$ cannot be wicked. If $f$ is uniquely ergodic, there is nothing to prove. Suppose it is not; then there exist two distinct \emph{ergodic} $f$-invariant measures $\nu_1, \nu_2$. To prove the proposition, it suffices to prove that $m_n = \frac{1}{n} \sum_{k=0}^{n-1}f_*^k m$ cannot accumulate on both $\nu_1$ and $\nu_2$.

Let $m_A$ denote the normalized restriction of Lebesgue measure to $A$. Then the limit
\begin{equation}
 \mu_A = \lim_{n \rightarrow \infty} \frac{1}{n} \sum_{k=0}^{n-1} f_*^k m_A
\end{equation}
is well-defined. In fact, the limit is the unique measure $\mu_A$ such that
$\int \varphi d\mu_A = \int ( \int \varphi d\mu_x) dm_A$ holds for every continuous $\varphi : M \rightarrow \mathbb{R}$. To see this, observe that
\begin{align}
 \lim_{n \rightarrow \infty} \int \varphi \  d\left( \frac{1}{n} \sum_{k=0}^{n-1} f_*^k m_A\right)
& = \lim_{n \rightarrow \infty} \int  \frac{1}{n} \sum_{k=0}^{n-1} \varphi(f^k(x)) \ dm_A(x) \label{line1} \\
& = \int \left( \int \varphi \ d\mu_x \right) \ dm_A(x) = \int \varphi \ d\mu_A, \label{line2}
\end{align}
where the passage from (\ref{line1}) to (\ref{line2}) is justified by the dominated convergence theorem. 

Note that if $A$ has full Lebesgue measure, then we are done because then $\mu_A$ will be the only accumulation point of $m_n$. For the remainder of the proof we therefore suppose that $A^c$ has positive Lebesgue measure. We denote by $m_{A^c}$ the normalized restriction of Lebesgue measure to $A^c$.
To prove that $m_n$ cannot accumulate on both $\nu_1$ and $\nu_2$, we start by fixing some $0< \epsilon < m(A)/2$. Since $\nu_1$ and $\nu_2$ are mutually singular, there exists a continuous function $\varphi:M \rightarrow [0,1]$ such that
$\int \varphi \ d\nu_1 <\epsilon$ and $\int \varphi \ d\nu_2 > 1-\epsilon$. Thus if $m_n$ were to accumulate on both $\nu_1$ and $\nu_2$ we would have
\begin{equation}\label{gap}
 \limsup_{ n \rightarrow \infty} \int \varphi dm_n - \liminf_{n \rightarrow \infty} \int \varphi dm_n > 1-2\epsilon > \mu(A^c).
\end{equation}
However, observing that
\begin{equation}
 \frac{1}{n} \sum_{k=0}^{n-1} f_*^k m = m(A^c) \frac{1}{n} \sum_{k=0}^{n-1} f_*^k m_{A^c} + m(A) \frac{1}{n} \sum_{k=0}^{n-1} f_*^k m_A,
\end{equation}
we estimate
\begin{equation}
 \limsup_{n \rightarrow \infty} \int \varphi dm_n \leq m(A^c) + m(A) \int \varphi d\mu_A
\end{equation}
and
\begin{equation}
  \liminf_{n \rightarrow \infty} \int \varphi dm_n \geq  m(A) \int \varphi d\mu_A.
\end{equation}
Therefore $\limsup_{ n \rightarrow \infty} \int \varphi dm_n - \liminf_{n \rightarrow \infty} \int \varphi dm_n \leq m(A^c)$, contradicting (\ref{gap}).

\end{proof}

In order to make the grand scheme of things clearer, we include an Euler diagram:

\vspace{1cm}

\begin{tikzpicture}
\draw (-3,-2.5) rectangle (8,2.8);
\draw (6.2,2.2) node {$C^0(M)$};

\draw (0,0) ellipse ( 2.5 and 1.7 );
\draw (0,1.2) node {$w2$};

\draw (-1,0) ellipse ( 0.9 and 0.9 );
\draw (-1,0) node {$w1$};

\draw (1.1,0) ellipse ( 0.7 and 0.7 );
\draw (1.1,0) node {$w3$};

\draw (5,0) ellipse (1.5 and 1.5);
\draw (4.6,1) node {$w4$};

\draw (5.3,-0.2) ellipse (0.5 and 0.5);
\draw (5.3,-0.2) node {$w5$};
\end{tikzpicture}

\subsection{Examples}

There are of course many well-known examples of systems which are wonderful (e.g, uniformly hyperbolic ones). We do not know of any example of weird systems in the literature. There are many examples of maps which have convergent Birkhoff averages Lebesgue almost everywhere and yet have no physical measures. The identity on any manifold or rational translations on tori are examples of such. We do not think these are weird at all and that is why we included the requirement of being totally singular into the definition of weird. Though as far as the literature contains no examples of weird dynamics, Theorem \ref{theoA} shows that weirdness is extremely abundant in the $C^0$ topology.  

Rigid periodic systems such as rational circle rotations are examples of wholesome systems thet are neither wonderful nor weird. Other types may easily be constructed by combining, for example, the identity and the map $f(x) = \frac{1}{10}\sin^2(2\pi x)$ on $S^1$ --- simply use the identity on the first half of the circle and $f$ on the other.

Some authors \cite{MR2180226, MR2122689} study the notion of natural measures. It is usually defined as a measure $\mu$ such that, given any measure $\nu$ absolutely continuous with respect to Lebesgue measure, we have 
\begin{equation}\label{natural}
\frac{1}{n}\sum_{k=0}^{n-1} f_*^k \nu \rightarrow \mu. 
\end{equation}
 Sometimes it is only required that (\ref{natural}) hold for all $\nu$ supported in the basin of a given topological attractor.
If a continuous map has a unique physical measure whose basin is of full Lebesgue measure, then this measure is also natural. However, it was proved in \cite{MR1950793} that there are natural measures that are not physical. For some time it remained unclear whether it could be true that every ergodic natural measure is physical, until this was proved in \cite{MR2122689} not to be the case. Misiurewicz \cite{MR2180226} gives an example of a continuous map $f: \mathbb{T}^2 \rightarrow \mathbb{T}^2$ having a natural measure, but with the pathological property that, for Lebesgue almost every $x\in \mathbb{T}^2$, the sequence $\frac{1}{n}\sum_{k=0}^{n-1} \delta_{f^k(x)}$ accumulates on the whole of $\mfm$, which, in Misiurewicz's example, is a very rich set. In particular, his example proves that there are systems that are wicked but not wacky.

\section{Proof of Theorem \ref{theoA}}\label{sectiontheoA}

Theorem \ref{theoA} is a fairly straightforward consequence of the following Lemma:

\begin{shred}
Let $\cC$ be be either the space $C^0(M)$ of all continuous maps from $M$ to itself with $dim(M)\geq 1$ or the space $\Hom(M)$ of all self-homeomorphisms of $M$ with $\dim(M) \geq 2$. Given any $\eps > 0$, there is a dense subset $\cC^{\eps} \subset \cC$ such that for every $f \in \cC^{\epsilon}$ there is a family of pairwise disjoint open sets $U_1, \ldots, U_{N}$ such that   

\begin{enumerate}[i)]

\item each $U_j$ is a trapping region: $f(\overline{U_j}) \subset U_j$ for every $j \in \{1, \ldots N \}$;

\item each $U_j $ has small Lebesgue measure: $$m(U_j) < \eps;$$

\item
the union of the sets $U_j$ occupies, Lebesgue-wise, most of $M$: $$m(\bigcup_{j=1}^N  \; U_j) > 1 - \eps;$$

\item  the sets $U_j$ are ``crushed'' by iteration: 
\begin{equation}\label{crushing}
m(f(U_j)) < \epsilon \cdot m(U_j)
\end{equation}

\item each $U_j$ is strictly contained in the basin of a periodic cycle of sets with small diameter: there exist open sets $W_j^1, \ldots, W_j^{k_j}$ such that 
\begin{enumerate}
\item $\diam(W_j^i)< \epsilon$ for every $i \in \{1, \ldots, k_j\}$,
\item $\overline{f(W_j^i)} \subset W_j^{i+1}$ for every $i\in \{1, \ldots, k_j-1\}$, \\ $\overline{f(W_j^{k_i})} \subset W_j^1$, and 
\item 
\begin{equation}
\overline{U_j} \subset \bigcup_{n \geq 0} f^{-n}(W_j^1 \cup \ldots \cup W_j^{k_j});
\end{equation}
\end{enumerate}

\end{enumerate}

\end{shred}

We first show how the Shredding Lemma implies Theorem \ref{theoA}, and later prove the Shredding Lemma itself.

\begin{proof}[Proof of Theorem \ref{theoA}]
First note that all five conclusions of the shredding lemma are robust under small $C^0$ perturbations so that the sets 
$\cC^\epsilon$ are, in fact, open and dense. 
Let $\cR$ be the residual set obtained by intersecting a countable number of these:

 $$\cR = \bigcap_{n \in \NN} \; \cC^{\frac{1}{n}}.$$

We claim that every $f \in \cR$ is weird. The easiest part is to prove that elements of $\cR$ can have no physical measures. This is a consequence of the observation that if $f \in \cC^{\epsilon}$ then 
\begin{equation}
\sup_{\mu \in \cM_f} m(B(\mu)) < 2 \epsilon.
\end{equation}
Indeed, it follows from properties ii) and iii) of the Shredding Lemma that if there were a measure $\mu$ with $m(B(\mu)) \geq 2 \epsilon$, then there would be points $x$ and $y$ of $B(\mu)$ which belong to different sets $U_j$ and $U_{j'}$. But a bump function that takes the value $1$ at $U_j$ and $0$ at $U_{j'}$ clearly shows that the points $x,y$ cannot lie in the basin of the same measure, a contradiction.

To see that every $f \in \cR$ is totally singular we make use of a little trick. Recall that $\cR$ was obtained as a a countable intersection of sets $\cC^{\frac{1}{n}}$. For each $n \in \NN$ let $V_n$ be the union of the trapping regions from the Shredding Lemma. Then $m(V_n)> 1-1/n$ and $m(f(V_n)) < 1/n$. Let 
\begin{equation}
\Lambda = \bigcap_{k \geq 1} \bigcup_{n \geq k} V_{n^2}.
\end{equation}
(The trick is to consider $V_{n^2}$ rather than $V_n$.) Observe that $\Lambda$ has full Lebesgue measure and that $m(f(\Lambda)) \leq \sum_{n=k}^{\infty} \frac{1}{n^2}$ for every $k \geq 1$, by the crushing property (\ref{crushing}). Hence $f(\Lambda) = 0$. The set $f(\Lambda)^c$, therefore, has total Lebesgue measure but its pre-image under $f$ has zero Lebesgue measure. Hence $f$ is totally singular.

It remains to show that elements of $\cR$ have convergent Birkhoff averages Lebesgue almost everywhere. Thus we fix $f\in \cR$ and consider the set 
\begin{equation}
\Lambda= \bigcap_{k \geq 1}\bigcup_{n\geq k} V_n.
\end{equation} 
(We could still work with $\Lambda= \bigcap_{k \geq 1}\bigcup_{n\geq k} V_{n^2}$ but for the current purpose it is not a very natural choice as we have no longer any need for summabillity.) Fix a continuous function $\varphi: M \to \RR$ and consider some point $x \in \Lambda$. We will show that, for every $\delta > 0$,
\begin{equation}\label{small variation}
\limsup_{n} \; \frac{1}{n} \sum_{m=1}^{n} \varphi(f^m(x)) - \liminf_{n} \; \frac{1}{n} \sum_{m=1}^{n} \varphi(f^m(x)) \leq  2 \delta.
\end{equation}
Once that is done, the proof is complete.

Since $\varphi$ is uniformly continuous, for large enough $n_0 \in \NN$ it holds that $|\varphi(x) - \varphi(y)| < \delta$ whenever $d(x, y) < \frac{1}{n_0}$. Now, by the definition of $\Lambda$,  there is some iterate $f^n(x)$ of $x$ belonging to the some set $W$ with  diameter smaller than $ \frac{1}{n_0}$ and such that $f^{k}(\overline{W}) \subset W$ for some $k \geq 1$. Since the veracity of (\ref{small variation}) does not change if $x$ is replaced with some iterate of itself, we might assume that $x\in W$ for simplicity. Observe that $d(f^{\ell \cdot k+r}(x), f^r(x))<1/n_0$ and, consequently, $|\varphi(f^{\ell \cdot k +r }(x))- \varphi(f^r(x))| < \delta$ for every $\ell \in \mathbb{Z}$. Thus, writing an arbitrary integer $n \geq 0$ as $n = \ell \cdot k + r$ with $0 \leq r < k$ and $\Gamma = \frac{1}{k} \sum_{m=0}^{k-1} \varphi(f^m(x))$ we obtain the estimate
\begin{equation}
\Gamma - \delta  - \frac{r}{n} \cdot \|\varphi\|_{C^0} < \frac{1}{n}\sum_{m=1}^{n} \varphi(f^m(x)) < \Gamma + \delta + \frac{r}{n} \cdot \|\varphi\|_{C^0},
\end{equation} 
of which (\ref{small variation}) is a direct consequence.
\end{proof}

We prove the Shredding Lemma for homeomorphisms and continuous mappings separately. Our proof relies on the existence of triangulations, and is the reason why we consider smooth manifolds in the first place\footnote{See remark \ref{remark on Lebesgue}}. There may be nothing fundamental about this. In fact, we find it likely that Theorem \ref{theoA} is generalizable to non-smoothable topological manifolds that do not admit triangulations (in which case the role of Lebesgue measure could be represented by any non-atomic measure positive on open sets). However, the decomposition of the manifold into simplices provides a very handy set of coordinates defined on convex subsets of $\mathbb{R}^n$ in which we can perform the explicit perturbations used in the proof. 

Thus by a "\emph{triangulation}" we simply mean a finite collection $\mathcal{R} = \{R_i \}_{i \in I}$ of compact subsets of $M$ homeomorphic to (say) the simplex
\begin{equation}
\Delta_d = \{(x_1, \ldots, x_d) \in \mathbb{R}^d:  \sum_{i=1}^d x_i \leq 1 \text{ and }x_i \geq 0 \text{ for all }i \},
\end{equation}
together with homeomorphisms $\xi_i: R_i \rightarrow \Delta_d$, such that $m(\partial R_i)=0$ for every $i$ and such that if $i \neq j$ then (i) either $R_i \cap R_j = \emptyset$ or (ii) $R_i \cap R_j \subset \partial R_i$. Cairns triangulation \cite{MR1503083} does the job. The diameter of a subset $A \subset M$ is defined as $\diam(A) = \sup_{x,y \in A} d(x,y)$ and the diameter of a triangulation $\mathcal{R} = \{R_1 \}_{i \in I}$ is $\diam \mathcal{R} = \max \{ \diam(R_i): i  \in I \}$. We shall say loosely that a triangulation is \emph{fine} if its diameter is small. By dividing the standard simplex into smaller simplices we may subdivide any triangulation into a finer one. In particular, there are arbitrarily fine triangulations of $M$.

\begin{proof}[Proof of the Shredding Lemma for homeomorphisms in higher dimensions]

We fix an arbitrary $f \in \Hom(M)$, and $\epsilon>0$. Consider a triangulation $\mathcal{R}=\{R_i\}_{i \in I}$ of $M$ with diameter $\epsilon_0<\epsilon$. We will construct homeomorphisms $h,k \in \Hom(M)$ satisfying $h(R_i) = R_i$ and $k(R_i) = R_i$ for every $i \in I$ (hence $d(h, \id) < \epsilon_0$ and $d(k,\id)<\epsilon_0$) such that $g = k f h$ satisfies properties i) to v) in the Shredding Lemma. Then, by uniform continuity of $f$,  $d(g, f) < \epsilon$ provided that $\epsilon_0$ is sufficiently small. Throughout the proof we use the notation $A \subset \subset B$ to express that $\overline{A} \subset B$ and $|S|$ to denote the cardinality of a finite set $S$.

Let $\tau:I \rightarrow I$ be any map such that $\interior(f(R_i)) \cap \interior ( R_{\tau(i)}) \neq \emptyset$ for every $i \in I$. Once such a map $\tau: I \rightarrow I $ is chosen we pick, for each $i \in I$, some point  $p_i \in \interior (R_i)$ with the property that $ f(p_i) \in \interior(R_{\tau(i)})$.
For $\delta>0$ we write $R_i^{\delta}$ for the set $\{x \in R_i: d(x,\partial R_i)>\delta\}$. We can (and do) choose $\delta>0$ small enough that $p_i \in R_i^{\delta} $ and $f(p_i) \in R_{\tau(i)}^{\delta}$ for every $i \in I$. 

Instead of proceeding with the full proof of the Shredding Lemma, we take a detour and show how we would go about if we were to prove only items i), iii), iv) and v). i.e. the part of the Lemma responsible for total singularity and almost everywhere convergence of Birkhoff averages of generic homeomorphisms. By doing so, we save time for the reader who is mostly interested in these topics as well as provide a warm-up for the rest.

The procedure is rather simple. Take $\delta'>0$ small enough that we have $f(B_{\delta'}(p_i)) \subset \subset R_{\tau(i)}^{\delta}$ for every $i \in I$. We wish to construct a homeomorphism $h:M \rightarrow M$ that sends each $R_{i}^{\delta}$ into $B_{\delta'}(p_i)$. This is easily done using the linear structure in each $R_ì$ induced by the charts $\xi_i: R_i \rightarrow \Delta^d$. 

An explicit choice can be obtained by taking 
\begin{equation}\label{perturbations}
h_i (x) = 
\begin{cases}
x & \text{ if } x \notin  R_i \\
\alpha(x)^T x +(1-\alpha(x)^T) p_i & \text{ if } x \in R_i,
\end{cases}
\end{equation}
for a sufficiently large $T>0$, where
\begin{equation}
\alpha(x) = \frac{d(x, p_i)}{d(x, p_i) + d(x, \partial R_i)},
\end{equation}
and let $h$ be the composition of the $h_i$. (Since the $h_i$ commute, the order of composition is irrelevant.)
One verifies that, no matter how small we take $\delta'>0$, we may always choose $T>0$ sufficiently large to obtain 
\begin{equation}
h(R_i^{\delta}) \subset \subset B_{\delta'}(p_i)
\end{equation}
and consequently 
\begin{equation}
fh(R_i^{\delta}) \subset \subset R_{\tau(i)}^{\delta}.
\end{equation}

The choice of the sets $U_j$ that needs to be made in order to them to satisfy items i), iii), iv) and v) of the Shredding Lemma depends on the dynamics of the map $\tau: I \rightarrow I$. 
Indeed, if $\tau:I \rightarrow I$ happens to be a cyclic permutation, then the  choice 
\begin{align} 
U_1 & = \bigcup_{i \in I} R_i^{\delta} \text{ and} \\
W_1^i & = R_{\tau^i(\alpha)}^{\delta}, 
\end{align}
where $\alpha$ is any element of $I$, will do.
On the other extreme, if $\tau$ happens to be the identity on $I$, we would simply take 
\begin{equation}
U_j = W_j^1 = R_j^{\delta}
\end{equation}
for every $ i \in I$.
For the general case we observe that, since $I$ is a finite set, it has at most a finite number of periodic orbits $O_1, \ldots O_N$, and that every $i \in I$ ends up in one of the $O_j$ after at most $|I|$ iterates. Thus the sets $\tilde{O}_j = \bigcup_{k=1}^{|I|} \tau^{-k}(O_j)$, $j=1, \ldots, N$ form a partition of $I$. The trapping regions that satisfy i), iii), iv) and v) of the Shredding Lemma are
\begin{equation}
U_j = \bigcup_{i\in \tilde{O}_j} R_i^{\delta}
\end{equation}
with $W_j^i = R_{\tau^i(\alpha_j)}^{\delta}$ for some choice of $\alpha_j \in O_j$ so that $k_j= |O_j|$ for every $j=1, \ldots, N$.

\subsection*{Pizza Slice}

In order to obtain item ii) of the Shredding Lemma, we need to improve on our construction. Note that, up to this point, we have not made use of the hypothesis that  $M$ is at least two dimensional. Yet Theorem \ref{theoB} tells us that any attempt to prove the Shredding Lemma in the case where $\dim M=1$ must inevitably fail. In our approach, what fails is the following pizza-like decomposition of the $R_i$ into subsimplexes $\{R_{ij}\}_{j \in J}$ which has to be done in such a way that the point $p_i$ is a vertex of each of the $R_{ij}$ (see figure \ref{slicing}). A convenient way to accomplish this is to decompose each face of our standard simplex $\Delta_d$ into a large number of $(d-1)$-subsimplexes and let each of these be a face of a $d$-simplex by joining the point $\xi_i (p_i)$ (recall that $\xi_i$ are the charts $R_i \rightarrow \Delta_k$).  Then we take the $R_{ij}$ to be the pre-image of such $d$-simplex under $\xi_i$. We think of each $R_{ij}$ as a "pizza slice" of the "pizza" $R_i$ due to a certain resemblance in the case when $d=2$.
 The important feture of the pizza-slice decomposition is that
 \begin{itemize}
 \item each $R_i$ is decomposed into the \emph{same number} of subsimplexes $R_{ij}$ so that they may be conveniently labeled as $R_{ij}$ where $(i,j) \in I \times J$,
 \item $p_i \in \partial R_{ij}$ for every $(i,j) \in I\times J$,
 \item $\partial R_{ij}$ has zero Lebesgue measure,
 \item $m(R_{ij}) < \epsilon \cdot m(R_i)$ for every $(i,j) \in I\times J$.
 \end{itemize}
 
 For each $(i,j) \in I \times J$ choose a point $p_{ij} \in \interior R_{ij}$ close enough to $p_i$ that $f(p_{ij}) \in \interior R_{\tau(i)}$. If necessary, we reduce $\delta$ so that $p_{ij} \in R_{ij}^\delta$ and $f(p_{ij}) \in R_{\tau(i)}^{\delta}$ for every $(i,j) \in I\times J$. Then choose $\delta'>0$ such that $f(B_{\delta'}(p_{ij})) \subset \subset R_{\tau(i)}^{\delta}$ for every $(i,j) \in I \times J$. We are later going to impose further conditions on the smallness of $\delta'$.
  
For each $(i,j) \in I \times J$ we define a homeomorphism $h_{ij}$ which is the identity on the complement of $\interior R_{ij}$, leaving $R_{ij}$ invariant,  and such that  
 \begin{equation}\label{sending Rij into a ball}
 h_{ij}(R_{ij}^\delta) \subset \subset B_{\delta'}(p_{ij}).
 \end{equation}
This can be done in complete analogy with (\ref{perturbations}) by using the linear structure on $R_{ij}$ induced by $\xi_i$. That is, we can take
\begin{equation}\label{hij}
h_{ij} (x) = 
\begin{cases}
x & \text{ if } x \notin  R_{ij} \\
\alpha(x)^T x +(1-\alpha(x)^T) p_{ij} & \text{ if } x \in R_{ij},
\end{cases}
\end{equation}
for a sufficiently large $T>0$, where
\begin{equation}
\alpha(x) = \frac{d(x, p_{ij})}{d(x, p_{ij}) + d(x, \partial R_{ij})}.
\end{equation}
Let $h:M \rightarrow M$ be the composition of all $h_{ij}$. The order, again, is irrelevant.
This $h$ has the property that 
\begin{equation}\label{almost there}
fh(R_{ij}^{\delta}) \subset \subset R_{\tau(i)}^{\delta} \text{ for every }(i,j) \in I \times J. 
\end{equation}
  \begin{figure}[h] 
 \center
 \includegraphics[width=5.0in]{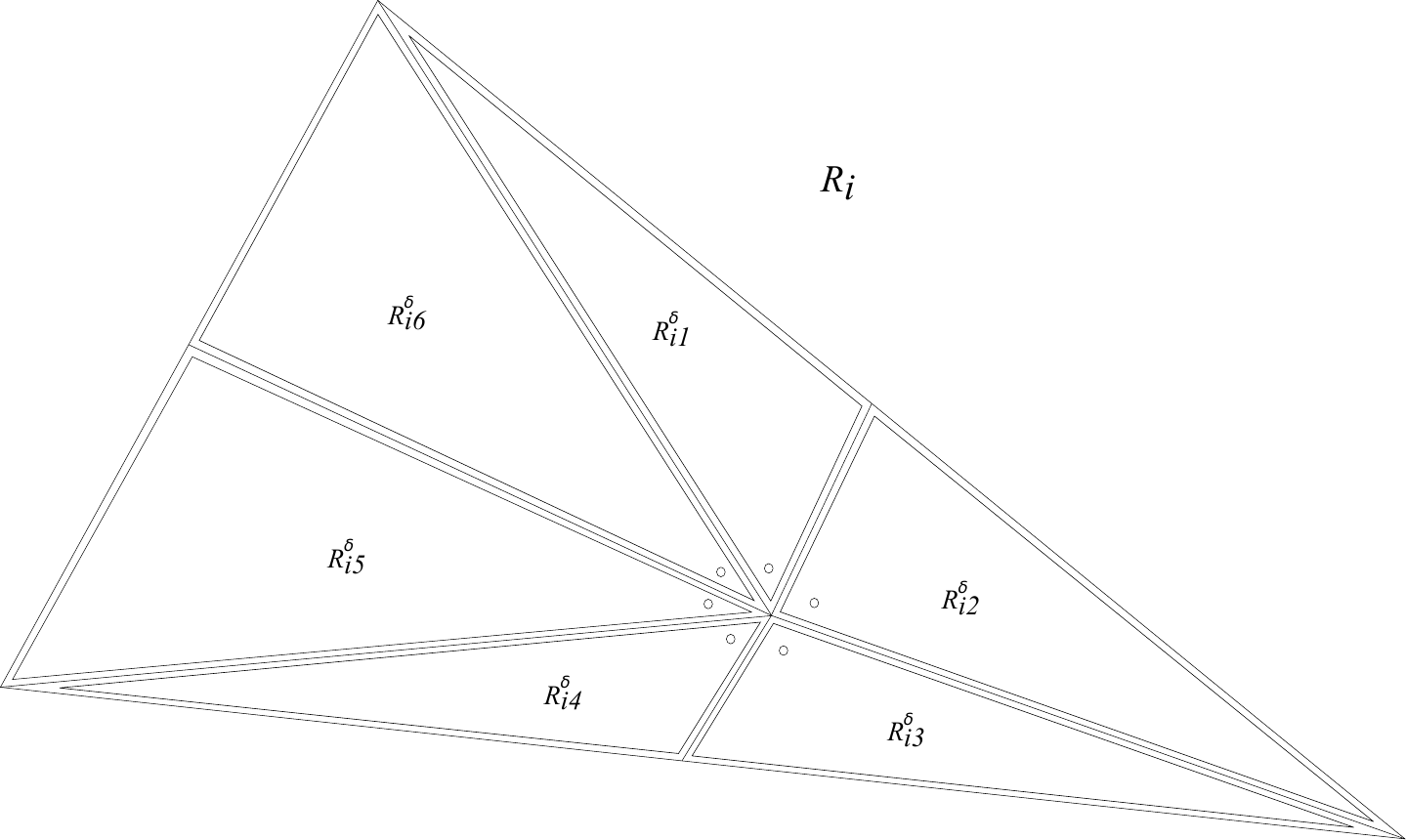}
 \caption{Pizza slice decomposition of a two dimensional simplex. Each slice is itself a simplex of dimesion two with a vertex at $p_i$. The six tiny circles in the illustration  are the balls $B_{\delta'}(p_{ij})$. Note that for small $\epsilon$ the number of slices will have to be very large (at least $\frac{1}{\epsilon}$) in order to get $m(R_{ij})<\epsilon \cdot m(R_i)$ for every $j \in J$. }
 \label{slicing}
  \end{figure}
 
 \subsection*{Tunelling}
 
 The final perturbation required to prove the Shredding Lemma is the  construction of  a homeomorphism $k:M \rightarrow M$ in such a way that 
 \begin{equation}
 kfh(R_{ij}^\delta) \subset \subset R_{\tau(i)j}^\delta \text{ for every }(i,j) \in I \times J. 
 \end{equation}
This is a significant improvement of (\ref{almost there}). It can be accomplished by composing a large number of homeomorphisms $k_{ij}$, each supported on $R_{\tau(i)j}$ (\emph{not} on $R_{ij}$). Each $k_{ij}$ is constructed as follows.

For $(i,j) \in I\times J$ pick $q_{ij} \in R_{\tau(i)j}^\delta$ in such a way that the straight line segment connecting $f(p_{ij})$ and $q_{ij}$ does not contain any of the points $f(p_{i'j'})$ nor $q_{i'j'}$ for $(i',j') \neq (i,j)$. This is possible because the $R_{ij}^\delta$ are open sets. We write $C_{ij}$ for the closed convex hull of $B_{\delta''}(f(p_{ij})) \cup B_{\delta''}(p_{ij})$ for a small number $\delta''>0$. By making sure that $\delta''>0$ is small enough we can guarantee that $B_{\delta''}(q_{ij}) \subset \subset R_{\tau(i)j}^{\delta}$ for every $(i,j) \in I \times J$ and also that 
\begin{equation} \label{no intersection}
B_{\delta''}(f(p_{i'j'})) \cap C_{ij} = B_{\delta''}(q_{i'j'}) \cap C_{ij} = \emptyset
\end{equation}
for every $(i,j) \in I\times J$ and $(i',j') \neq (i,j)$. If necessary, we reduce $\delta'$ so that $f(B_{\delta'}(p_{ij})) \subset \subset B_{\delta''}(f(p_{ij}))$. In this case we may also have to increase the value of the number $T$ in (\ref{hij}) further so that (\ref{sending Rij into a ball}) hols for each $(i,j) \in I \times J$.
 
 We define $k_{ij}$ to be a homeomorphism on $M$, leaving $C_{ij}$ invariant, with the property that $k_{ij}( f(B_{\delta'}(p_{ij}))) \subset \subset B_{\delta''}(q_{ij})$ and $k_{ij}(x)=x$ for any $x \notin C_{ij}$.
 An explicit formula for such $k_{ij}$ can be written down using the linear structure on $R_{\tau(i)}$ induced by $\xi_{\tau(i)}$:
 \begin{equation}
k_{ij} (x) = 
\begin{cases}
x & \text{ if } x \notin \interior C_{ij} \\
\beta_{ij}(x)^T x +(1-\beta_{ij}(x)^T) q_{ij} &\text{ if } x \in C_{ij},
\end{cases}
 \end{equation}
 where
 \begin{equation}
 \beta_{ij}(x) = \frac{d(x, q_{ij})}{d(x, q_{ij}) + d(x, \partial C_{ij})}
 \end{equation}
 and $T>0$ is sufficiently large.
 
 The desired homeomorphism $k: M \rightarrow M$ is now obtained by composing all the $k_{ij}$. If some of the $C_{ij}$ intersect each others, the resulting $k$ will depend on the order of composition of the $k_{ij}$. We can choose any order, as the condition (\ref{no intersection}) makes sure that 
 \begin{equation}
 kfh(R_{ij}^{\delta}) \subset \subset B_{\delta''}(q_{ij}) \subset \subset R_{\tau(i)j}^{\delta}
 \end{equation}
 for every $(i,j) \in I\times J$.

 \begin{figure}[h]
 \center
 \includegraphics[width=5.0in]{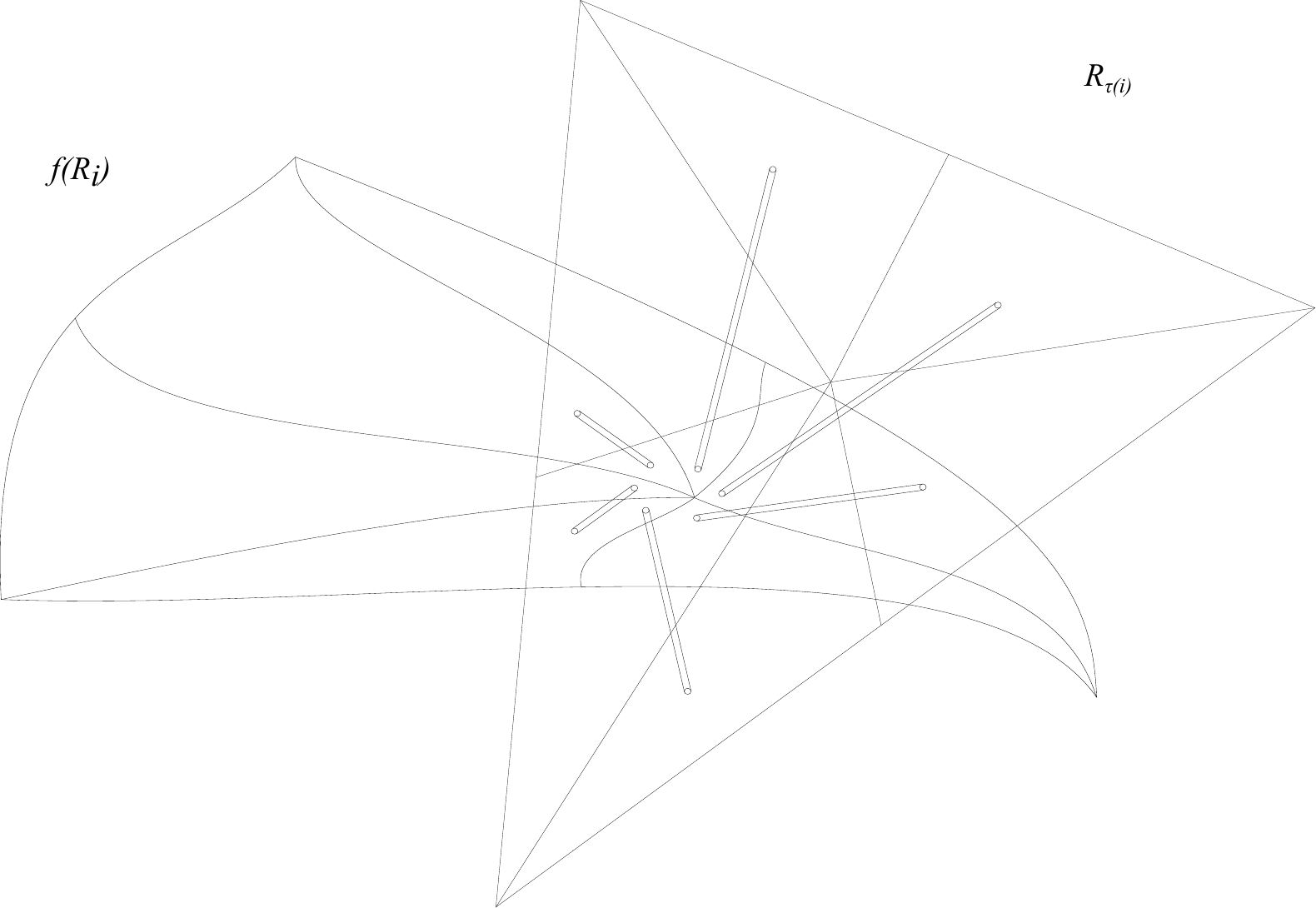}
 \caption{Illustrarion of the tunneling procedure. The six fine tubes represent the 
 $C_{ij}$. The homeomorphism $k$ is the identity outside these tubes and acts by moving all points from the sets $f(B_{\delta'}(p_{ij}))$ to their final destination in $R_{\tau(i)j}^{\delta}$.}
  \end{figure}
 
It is now easy to check that the sets
\begin{equation} \label{second try}
U_j = \bigcup_{i \in I} R_{ij}^{\delta}
\end{equation}
satisfy, not only i), iii) and iv), but also ii) of the Shredding Lemma; the neccesary ingredient for proving non-existence of physical measures. Indeed,
\begin{equation}\label{ nonexistence of pm}
m(U_j) = \sum_{i \in I} m(R_{ij}^\delta) < \sum_{i \in I} \epsilon \cdot m(R_i) = \epsilon.
\end{equation}

Alas, the sets $U_j$ in (\ref{second try}) need not satisfy item v) of the Shredding Lemma. Just as in our preliminary detour at the beginning of this proof, whether or not they do will ultimately depend on the particular dynamics of the map $\tau$, acting on $I$. If, for example, $\tau$ is a cyclic permutation, then item v) is satisfied by the above choice of $U_j$ simply by taking $W_j^i = R_{\tau^i(\alpha)j}^{\delta}$ for each $j\in J$ and any $\alpha \in I$ (so that $k_j=|J|$). If, on the other hand, $\tau :I \rightarrow I$ is the identity, then there would have been no need to perform the pizza slice decomposition in the first place. But given that we have done it, the trapping regions sought are simply the $R_{ij}^{\delta}$. All we have to do is to is to enumerate them and call them $U_1, \ldots, U_N$ and take $W_j^1 = U_j$ (so that $N = |I \times J|$ and $k_j=1$ for every $j \in \{1, \ldots, N\}$). 

For the general case, we consider the periodic orbits $O_1, \ldots, O_s \subset I$ under $\tau$ and recall that, since every $i\in I$ falls into one of these after at most $|I|$ iterates, we can partition $I$ into $\tilde{O}_j = \bigcup_{k=1}^{|I|} \tau^{-k}(O_j)$, $j=1, \ldots, s$
The trapping regions that satisfy the full statement of the Shredding Lemma are
\begin{equation}
\tilde{U}_{(r,j)} = \bigcup_{i\in \tilde{O}_r} R_{ij}^{\delta}, \quad (r,j) \in \{1, \ldots, s\}\times J,
\end{equation}
conveniently relabeled as $U_1, \ldots, U_N$ (so that $N= s \cdot |J|$). Associated to a trapping region $\tilde{U}_{(r,j)}$, the cyclic sets of small diameter required by item v) of the Shredding Lemma can then be chosen by taking $W_{(r,j)}^i = R_{\tau^i(\alpha)j}^{\delta}$ for any $\alpha \in O_r$ (so that $k_j=|O_r|$).

\end{proof}

\begin{proof}[Proof of the Shredding Lemma for continuous mappings] 

We start by fixing some arbitrary $f \in C^0(M)$ and $\epsilon>0$. The aim of the proof is to describe how to find $g \in C^0(M)$ with $d(g, f)<\epsilon$ such that $g$ satisfies items i) to v) of the Shredding Lemma.

Let $\mathcal{R}=\{R_i\}_{i \in I}$ be a fine triangulation of $M$. Precisely how fine it should be is a question we postpone to the end of the proof. For the moment we content ourselves by requiring that $\dim \mathcal{R}$ be less than $\epsilon$. Subdivide each simplex $R_i \in \mathcal{R}$ into a union of subsimplices $R_i = \bigcup_{j \in J} R_{ij}$ so that 
\begin{equation}
m\left(\bigcup_{i \in I} R_{ij} \right) < \epsilon \quad \text{ for every } j \in J.
\end{equation}
Any subdivision will do. It does not have to be of pizza slice type, as it did in the case of homeomorphisms. In fact, when $\dim M =1$ the $R_i$ are intervals and the only subdivision possible is to write $R_i$ as a union of smaller intervals $R_{ij}$ that intersect at most on their end points.
For $\delta>0$, let $R_{ij}^{\delta} = \{x \in R_{ij}: d(x, \partial R_{ij}) > \delta \}$. We choose $\delta$ small enough so that  
\begin{equation}
m\left( \bigcup_{(i,j) \in I \times J} R_{ij}^{\delta} \right) > 1- \epsilon.
\end{equation}

For each $i\in I$ choose some $\tau(i) \in I$ such that $f(R_i) \cap R_{\tau(i)} \neq \emptyset$. Choose points $p_{ij}$ in the interior of $R_{ij}$. If necessary, we reduce $\delta$ so that $p_{ij} \in R_{ij}^{\delta}$ for every $(i,j) \in I \times J$. We shall define $g$ in such a way that $g=f$ on each $\partial R_{ij}$ and such that $f$ takes the constant value $p_{\tau(i)j}$ on the whole of $\overline{R_{ij}^{\delta}}$. That is,
\begin{align}
& g \vert \partial R_{ij} = f\vert \partial R_{ij} \quad \text{ and } \label{identity on border} \\
& g \vert \overline{R_{ij}^{\delta}} = p_{\tau(i)j}  \label{constant image} 
\end{align}
for every $ i\in I$ and $j \in J$.

 \begin{figure}[h]
 \center
 \includegraphics[width=4.0in]{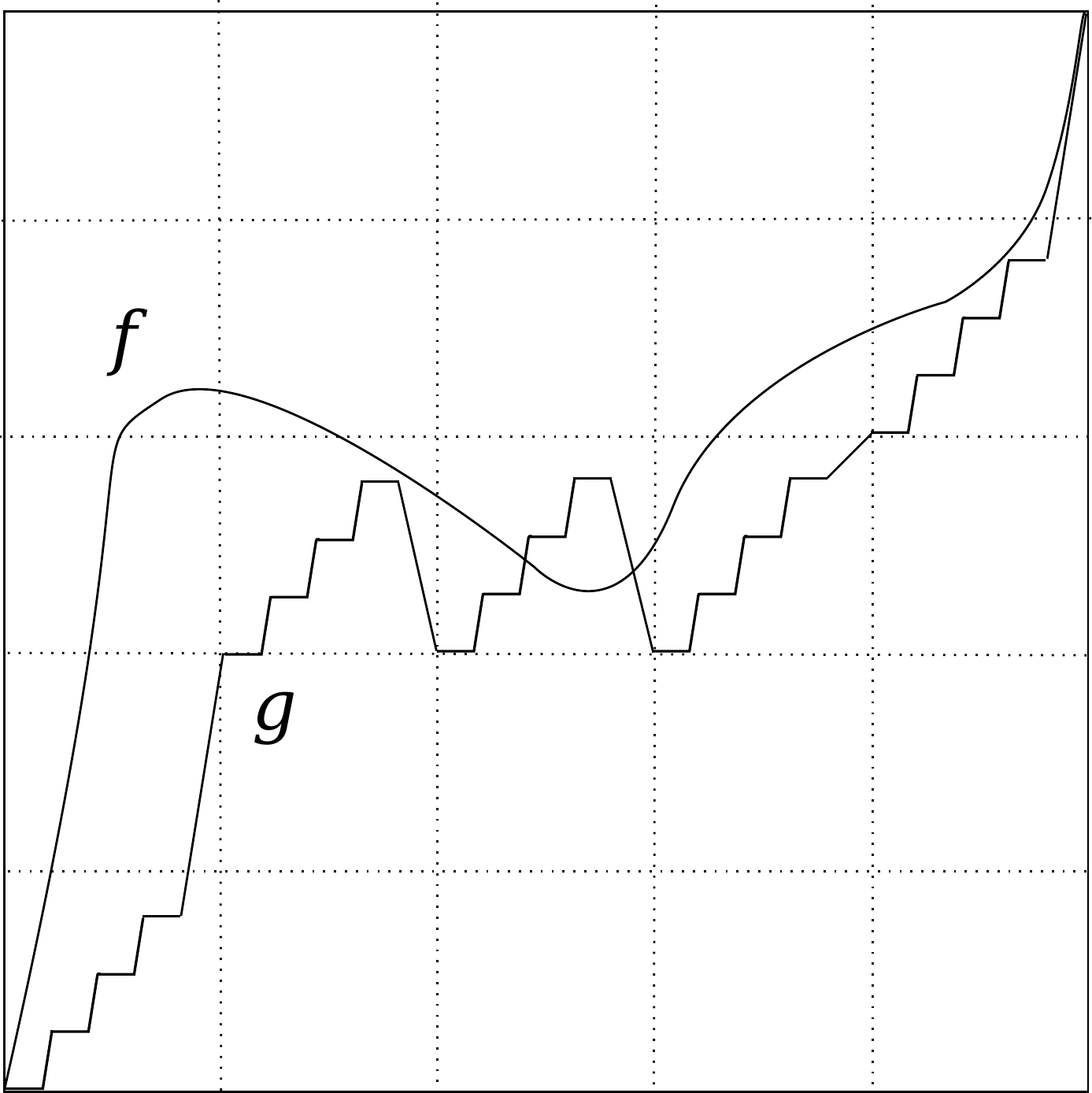}
 \caption{On $S^1$, the Shredding Lemma is obtained by perturbing the original map with a step function. In the above illustration, $S^1=R_1 \cup \ldots \cup R_5$ and $R_i=R_{i1}\cup \ldots \cup R_{i4}$ for $i=1, \ldots, 5$. The map $\tau$ has two periodic orbits, namely the fixed points $\{ 1 \}$ and $\{ 3 \}$ (if the $R_i$ are ordered from left to right). In particular our construction gives eight trapping regions in this case.}
 \end{figure}

Using the same notation as in the proof of the homeomorphism case, i.e. by considering the partition $\tilde{O}_1, \ldots, \tilde{O}_s$ of $I$, consisting of pre-images of the periodic orbits $O_1, \ldots, O_s$ under $\tau$, it shold now be clear that any $f:M \rightarrow M$ satisfying (\ref{identity on border}) and (\ref{constant image}) also satisfies items i) to v) of the Shredding Lemma with $U_j$ being a relabeling of the sets \begin{equation}
\tilde{U}_{(r,j)} = \bigcup_{i\in \tilde{O}_r} R_{ij}.
\end{equation}
Hence what remains to do is to make sure that $g$, as defined in (\ref{identity on border}) and (\ref{constant image}), can be extended continuously to $M$ in a way that $d(f,g) < \epsilon$.
By supposing that the triangulation $\mathcal{R}$ is sufficiently fine, we may assume that the charts $\xi_i$ extend to homeomorphisms $\psi_i: V_i \rightarrow \psi_i(V_i)\subset \mathbb{R}^n$ on open sets $V_i \supset R_i$ such that, for every $i \in I$, the closed convex hull of $f(R_i) \cup R_{\tau(i)}$ is contained in $V_{\tau(i)}$. More precisely, writing $i' = \tau(i)$, we require that $f(R_i) \subset V_{i'}$ and that the closed convex hull of $\psi_{i'} (f(R_i) \cup R_{i'})$ be contained in $\psi_{i'}(V_{i'})$. In the linear structure induced by $\psi_{i'}$ we may define $g$ on each $R_{ij}$, explicitly by fixing a continuous function $\varphi_{ij}:R_{ij} \rightarrow [0,1]$ satisfying $\varphi_{ij} \vert \partial R_{ij} \equiv 0$ and $ \varphi_{ij} \vert R_{ij}^{\delta} \equiv 1$ and define $g$ by
\begin{equation}
g(x) = (1-\varphi_{ij}(x)) \cdot f(x) + \varphi_{ij}(x) p_{i'j'}
\end{equation}
for every $x \in R_{ij}$. But this way of writing is of course only a shorthand for the chart dependent expression
\begin{equation}
 g (x)  =  \psi_{i'}^{-1} \left[ (1-\varphi_{ij}(x)) \cdot \psi_{i'}(f (x)) + \varphi_{ij}(x) \psi_{i'}(p_{i'j})\right] 
\end{equation}
for every $x \in R_{ij}$. Recall that we need to determine under what circumstances we have $d(f(x), g(x))< \epsilon$ for every $x \in M$. This would be guaranteed if 
\begin{equation}\label{smaller than epsilonprime}
\|\psi_{i'} \circ g(x)- \psi_{i'} \circ f(x)\|< \epsilon_1,
\end{equation} 
where $\epsilon_1>0$ is a number such that $d(\psi_{i'}^{-1}(\tilde{x}), \psi_{i'}^{-1}(\tilde{y}))<\epsilon$ whenever $\|\tilde{x}-\tilde{y}\|<\epsilon_1$, with $\| \cdot \|$ denoting the Euclidian norm in $\mathbb{R}^n$. But 
\begin{align}
\|\psi_{i'} \circ g(x)-\psi_{i'} \circ f(x)\| & = \varphi_{ij} \|\psi_{i'}(p_{i'j'})-\psi_{i'}(f(x))\| \\
& \leq \|\psi_{i'}(p_{i'j'})-\psi_{i'}(f(x))\|.
\end{align}
Hence (\ref{smaller than epsilonprime}) would be guaranteed if $d(p_{i'j'},f(x))<\epsilon_2$, where $\epsilon_2>0$ is a number such that $\|\psi_{i'}(x)-\psi_{i'}(y)\|< \epsilon_1$ whenever $d(x,y)< \epsilon_2$. Now, recall that both $p_{i'j'}$ and $f(x)$ belong to a set of the form $f(R_{i}) \cap R_{\tau(i)}$ where $f(R_{i}) \cap R_{\tau(i)} \neq \emptyset$. Hence we will always have $d(p_{i'j'},f(x))<\epsilon_2$ provided that the diameter of $\mathcal{R}$ is small enough. It is therefore essencial that we can take the diameter of $\mathcal{R}$ to be as small as we want \emph{without changing the charts} $\xi_i$, i.e. so that the same numbers $\epsilon_1$ and $\epsilon_2$ still have the properties we require of them for arbitrarily fine triangulations. But that is easily done simply by subdividing a given triangulation into a finer one, using the same charts for all the subsimplices of the finer triangulation. 

\end{proof}

\section{Proof of Theorem \ref{theoB}}\label{sectiontheoB}

We shall prove that the nonwandering sets of generic homeomorphisms in $\Hom(S^1)$ are zero Lebesgue measure Cantor sets of periodic points, and then deduce the desired dynamical consequences from this. We remark that Akin-Hurley-Kennedy have already proved (see Theorem $6.4$ on page $68$ of \cite{MR1980335}) that the nonwandering sets of generic homeomorphisms of arbitrary manifolds are Cantor sets, but we include a full proof (i) because on the circle the proof is simpler and shorter; (ii) for the sake of completeness; and (iii) because we use some of the vocabulary of the proof when we subsequently prove that these nonwandering sets have zero Lebesgue measure.

\begin{proposition} \label{prop.cantor}
The nonwandering sets of generic homeomorphisms in $\Hom(S^1)$ are zero Lebesgue measure Cantor sets of periodic points.
\end{proposition}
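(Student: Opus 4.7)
I plan to exhibit a residual subset $\mathcal{R}\subset\Hom(S^1)$ on which $\Omega(f)$ is a zero-measure Cantor set of periodic points, as the intersection of two families of open-dense sets: one encoding density and non-isolation of periodic points in $\Omega(f)$, the other encoding a Morse--Smale-like trapping structure at all scales.

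Fix a countable basis $\{B_k\}$ of the topology of $S^1$. For each pair $(k,m)$ of positive integers, let $\mathcal{A}_{k,m}=\{f:\overline{B_k}\cap\Omega(f)=\emptyset\}\cup\{f:B_k\text{ contains at least }m\text{ topologically hyperbolic periodic orbits of }f\}$. Each $\mathcal{A}_{k,m}$ is $C^0$-open: the first alternative is open because wandering of the compact set $\overline{B_k}$ is detected by a finite cover by trapping arcs that persists under $C^0$ perturbation, and the second is open because each topologically hyperbolic periodic orbit persists under $C^0$ perturbation. Density is proved by a $C^0$-closing construction on the circle: a nonwandering point in $B_k$ admits an arc whose image under some iterate $f^n$ returns nearly onto itself, and a small local displacement closes it up into a topologically hyperbolic periodic orbit; iterating the procedure one creates any prescribed number $m$ of such orbits in $B_k$. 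On $\bigcap_{k,m}\mathcal{A}_{k,m}$, every basic open set meeting $\Omega(f)$ contains infinitely many topologically hyperbolic periodic orbits, giving both $\overline{\operatorname{Per}(f)}=\Omega(f)$ and the absence of isolated periodic points.

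For zero measure and empty interior, let $\mathcal{M}_n$ be the set of $f$ admitting finitely many pairwise disjoint open arcs $U_1,\ldots,U_k$ such that $f^{q_j}(\overline{U_j})\subset U_j$, the maximal $f^{q_j}$-invariant subset of $U_j$ is a single topologically attracting periodic orbit, $\diam(U_j)<1/n$, and $m\bigl(\bigcup_j U_j\bigr)>1-1/n$. Openness follows from persistence of trapping and topological hyperbolicity. For density, given $f\in\Hom(S^1)$ and $\epsilon>0$, I first shift a lift $\tilde f$ by a suitable constant so that the rotation number becomes a rational $p/q$ with $q$ as large as desired (handling the orientation-reversing component by passing to $f^2$), and then perform a further $\epsilon$-small perturbation along the orbit of a single point per periodic cycle so that $f^q$ acquires a finite alternating family of topologically attracting and repelling fixed points spaced less than $1/n$ apart (obtained by locally modifying the displacement $\tilde f^q(x)-x-p$ to have only transverse zeros in a prescribed $1/n$-dense configuration). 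The open arcs strictly between consecutive repelling fixed points, slightly shrunk inward, then serve as the required trapping intervals $U_j$, each containing exactly one attracting periodic orbit.

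For $f$ in the residual intersection $\mathcal{R}=\bigl(\bigcap_{k,m}\mathcal{A}_{k,m}\bigr)\cap\bigl(\bigcap_n\mathcal{M}_n\bigr)$, the nonwandering set $\Omega(f)=\overline{\operatorname{Per}(f)}$ is perfect; it meets each $U_j$ in a single finite attracting periodic orbit and lies outside $\bigcup_jU_j$ in a set of measure less than $1/n$, so $m(\Omega(f))=0$ and $\Omega(f)$ has empty interior. Hence $\Omega(f)$ is a zero-measure Cantor set of periodic points. I expect the main obstacle to be the density step for $\mathcal{M}_n$: starting from an arbitrary $f$ (with possibly irrational or Liouville rotation number, or Denjoy-type dynamics), producing a $C^0$-close homeomorphism with the full Morse--Smale-like data at arbitrary scale $1/n$ requires combining the rotation-number shift with a large-$q$ hyperbolization of every periodic orbit in a $\mathbb{Z}$-equivariant manner on the lift, while keeping careful track of the orientation-reversing component via $f^2$.
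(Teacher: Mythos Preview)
Your openness arguments have two genuine gaps. First, $\mathcal{A}_{k,m}$ is not open: take $f$ a Denjoy homeomorphism (irrational rotation number, non-minimal) and $B_k$ a wandering gap interval, so $\overline{B_k}\cap\Omega(f)=\emptyset$ while $f$ has no periodic points; then $f\in\mathcal{A}_{k,m}$ via the first alternative. But $f$ is $C^0$-approximated by $C^2$ diffeomorphisms $g$ with irrational rotation number, which are minimal by Denjoy's theorem, so $\Omega(g)=S^1$ and $g$ has no periodic orbits---hence $g\notin\mathcal{A}_{k,m}$. Your ``finite cover by trapping arcs'' justification fails precisely here, since a Denjoy map admits no trapping arc whatsoever. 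Second, the requirement in $\mathcal{M}_n$ that the maximal $f^{q_j}$-invariant subset of $U_j$ be a \emph{single} periodic orbit is not $C^0$-open: a topologically attracting fixed point of $f^{q_j}$ can be flattened by an arbitrarily small $C^0$ perturbation into an interval of fixed points. If you drop this condition you can no longer bound $m(\Omega(f)\cap U_j)$, and the zero-measure conclusion collapses.

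Both gaps are repaired by a preliminary reduction you omit, which is exactly the paper's first step: restrict to the open-and-dense set $\mathcal{O}\subset\Hom(S^1)$ of homeomorphisms possessing a topologically transversal periodic orbit, on which the rotation number is rational and \emph{locally constant}, so that $\Omega(f)=\operatorname{Per}(f)=\operatorname{Fix}(f^q)$ with $q$ locally constant. Inside $\mathcal{O}$ the map $f\mapsto\Omega(f)$ is genuinely upper semicontinuous, your $\mathcal{A}_{k,m}$ becomes open, and the perfectness and empty-interior arguments go through essentially as you wrote them. For zero measure the paper then argues far more simply than your $\mathcal{M}_n$: the set $\{f\in\mathcal{O}:m(\operatorname{Per}(f))<\varepsilon\}$ is open (upper semicontinuity plus outer regularity of $m$) and dense (Morse--Smale approximation), with no need for trapping regions, large-$q$ rotation shifts, or $1/n$-dense hyperbolization.
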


\begin{proof}
We deal only with the case of orientation preserving homeomorphisms, as the orientation reversing case may be obtained through minor modifications. Thus let $\Hom_+(S^1)$ be the set of orientation preserving homeomorphisms of the circle. The proposition is proved in a series of steps:

\begin{description}

\item[Step 1:] There is an open-and-dense subset $\cO$ of $\Hom_+(S^1)$ such that every $f \in \cO$ has rational rotation number; moreover given any $f \in \cO$, its rotation number is constant in a neighborhood of $f$.

First, by closing recurrent orbits it follows that there is a dense subset $\cD$ of $\Hom(S^1)$ such that every $f \in \cD$ has at least one periodic orbit (i.e., a rational rotation number). With an additional small $C^0$ perturbation one produces a topologically transversal periodic orbit $p$ (i.e., a periodic point of period $\pi$, say, such that that $f^{\pi}$ has a lift $F: \mathbf{R} \rightarrow \mathbf{R}$ satisfying $F(x)< F(p)=p < F(y)$ or $F(x)>F(p)=p > F(y)$ for some $x<p<y$). The existence (although not the uniqueness) of a transversal periodic orbit is a $C^0$-open condition by the intermediate value theorem. This implies that there is an open-and-dense subset $\cO$ of $\Hom(S^1)$ such that every $f \in \cO$ has rational rotation number. Moreover, the rotation number is constant in a neighbourhood of $f$ in $\cO$. We remark that it is a well-known fact that if the rotation number is rational then $\Omega(f) = Per(f)$.

\item[Step 2:] There is a residual subset $\cR_1$ of $\cO$ (and hence of $\Hom(S^1)$) such that the nonwandering set of every $f \in \cR_1$ has empty interior.

Fix a countable open basis $\{I_k\}$ of intervals of the circle. Given an interval $I_k$ of the basis and a homeomorphism $f \in \cO$ whose periodic orbits have a given period $\pi$, we can always perturb $f$ so that $f^{\pi}|_{I_k}$ does not coincide with the identity -- which means that the set $I_k$ does not consist of periodic points. Clearly this is a $C^0$-open condition. So there is an open-and-dense set $\cA_k$ of homeomorphisms $f$ such that $I_k \setminus Per(f)$ is (open and) nonempty. Taking the intersection we obtain a set 
\begin{equation}
\cR_1 \equiv \bigcap_{k \in \NN} \; \cA_k,
\end{equation}
residual in $\cO$, such that, by construction, if $f \in \cR_1$ then $Per(f)$ does not contain any interval.

\item[Step 3:] There is a residual subset $\cR_2$ of $\cO$ (and hence of $\Hom_+(S^1)$) such that the nonwandering set of every $f \in \cR_2$ is perfect.

Given $k, n \in \NN$ we shall prove the following: there is an open-and-dense subset $\cB_n^k$ of $\cO$ such that if $f \in \CB_n^k$ is such that $Per(f) \cap I_k \neq \emptyset$, then $\# (Per(f) \cap I_k) \geq n$. Indeed, given $f \in \cO$, then either (i) $Per(f) \cap I_k = \emptyset$ -- and this is a $C^0$-open condition by the upper-semicontinuous variation of the set $Per(f)$ with $f$ --; or (ii) $Per(f) \cap I_k \neq \emptyset$. In the second case, we perturb $f$ around (the preimage of) some periodic point in $Per(f) \cap I_k$ so as to ``unfold it'' into at least $n$ topologically transverse periodic points $p_1, \ldots, p_n \in I_k$. This condition is also $C^0$-open and it therefore yields the desired set $\cB_n^k$. Now take the (residual) intersection $\cR_2 \equiv \bigcap_{n, k \in \NN} \cB_n^k$. By construction, given $f \in \cR_2$ and any basic interval $I_k$ then either $Per(f) \cap I_k = \emptyset$ or $\#(Per(f) \cap I_k) = \infty$. This clearly implies that the set $
 Per(f)$ (which coincides with $\Omega(f)$) is perfect.

\end{description}

The homeomorphisms in $\cR_1 \cap \cR_2$ have Cantor nonwandering sets which consist of periodic orbits: their nonwandering sets are (i) of course compact, (ii) coincide with the set of periodic points by rationality of the rotation number, (iii) have empty interior by step 2, and (iv) are perfect by step 3. It remains to show that, generically, the set of periodic points has zero Lebesgue measure.

\begin{itemize}
\item Step 4: There is a residual subset $\cR_3$ of $\cO$ (and hence of $\Hom_+(S^1)$) such that the set of periodic points $Per(f)$ of every $f \in \cR_3$ has zero Lebesgue measure.

Given any homeomorphism $f \in \cO$, we can via a small $C^0$-perturbation smooth it into a diffeomorphism $f \in \cO$, which in turn can be perturbed into a Morse-Smale diffeomorphism, which has a finite (and hence zero Lebesgue-measure) set of periodic points. That is, there is a dense subset $\cD$ of $\cO$ which consists of homeomorphisms $f$ such that $m(Per(f)) = 0$. Now, given $\eps > 0$, by the upper semicontinuity of the map $f \to Per(f)$ there is an open neighborhood $\cU_{\eps}(f)$ of $f$ in $\cO$ such that if $g \in \cU_{\eps}(f)$ then $m(Per(g)) < \eps$. Now define $\cW_{\eps} \equiv \bigcup_{f \in \cD} \; \cU_{\eps}(f)$ and set $\cR_3 \equiv \bigcap_{n \in \NN} \; \cW_{\frac{1}{n}}$ to obtain the desired residual subset of $\cO$.

\end{itemize}

\end{proof}

We now deduce Theorem \ref{theoB} from Proposition \ref{prop.cantor}.

\begin{proof}[Proof of Theorem \ref{theoB}]

Again, we only deal explicity with the case of orientation preserving homeomorphisms, leaving the details of the orientation reversing case to the reader. Thus let $f$ be a generic homeomorphism in $\Hom_+(S^1)$ whose periodic points have period $\pi$. Then the (open and full-Lebesgue) set $S^1 \setminus Per(f)$ consists of a countable union of pairwise disjoint open intervals $I$ such that $f^{\pi}(I) = I$ and moreover the extremes of $I$ are two periodic points $p_1$ (on the left) and $p_2$ (on the right), which are necessarily extremal points of the Cantor set $Per(f)$.

Let $F$ be the lift of $f^{\pi}$ to the real line which has fixed points. Given an interval $I$ as above, there are two possible cases: either the graph of $F$ restricted to $I$ is below the identity, or else the graph of $F$ restricted to $I$ is above the identity.

In the first case, by dynamical monotonicity all of the points $x \in I$ converge in the future to the orbit of $p_1$: $d(f^{\pi k}(x), f^{\pi k}(p_1)) \to 0$; in the second case, the points $x$ of $I$ converge in the future to the orbit of $p_2$: $d(f^{\pi k}(x), f^{\pi k}(p_2)) \to 0$. This means that the periodic Dirac measure associated to the orbit of $p_1$ (in the first case) or of $p_2$ (in the second case) contains $I$ in its basin of attraction. In other words, Lebesgue-a.e. point of the circle belongs to the basin of attraction of the Dirac measure associated to an extremal point of the Cantor set $Per(f)$. This shows that $f$ is indeed countably wonderful, as claimed.
\end{proof}

\section{Proof of Theorem \ref{theoC}}

We recall the statement of Theorem \ref{theoC}: A generic continuous circle map, topologically conjugated to a linear expanding one, is wicked. `Generic' here means `generic in the induced $C^0$ topology on the set of all continuous maps conjugated to a linear expanding one'. In proving the theorem, we do not work directly with the maps themselves, but rather with the conjugating homeomorphisms. More precisely, what we actually prove is the following: Let $E_\ell$ denote the linear expanding circle map $x \mapsto \ell x$, for some integer $\ell$ with $|\ell| \geq 2$. Then for a generic circle homeomorphism $h$, the map $f = h^{-1}E_\ell h$ is wicked. The statement of Theorem \ref{theoC} then follows by the following proposition.

\begin{proposition} \label{locally homeomorphic}
\ 
\begin{enumerate}
\item The decomposition $CE(S^1) = \bigcup_{\vert \ell \vert \geq 2} CE_\ell(S^1)$ is a decomposition into isolated sets.
\item For each integer $\ell$ with $\vert \ell\vert \geq 2$, $CE_\ell(S^1)$ is locally homeomorphic to $\Hom_+(S^1)$. In fact, the map
\begin{align}
\Psi: \Hom_+(S^1) & \rightarrow CE_\ell(S^1) \\
h & \mapsto h^{-1} E_\ell h
\end{align} 
is a $\vert \ell-1 \vert $-to-one surjection, mapping  a neighbourhood of every $h$ in $\Hom_+(S^1)$ homeomorphically onto its image. In particular, $CE(S^1)$ is a Baire space.
\end{enumerate}
\end{proposition}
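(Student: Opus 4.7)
\textbf{Proof plan for Proposition \ref{locally homeomorphic}.}

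Part (1) is immediate from the topological degree being $C^0$-continuous on $C^0(S^1)$: each $CE_\ell(S^1)$ lies inside the $C^0$-open set $\{f \in C^0(S^1) : \deg f = \ell\}$, and these open sets are pairwise disjoint for different $\ell$, so the sets $CE_\ell(S^1)$ are pairwise isolated inside $CE(S^1)$.

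For part (2), surjectivity of $\Psi$ onto $CE_\ell(S^1)$ follows from the definition after replacing $h$ by $h^{-1}$, and continuity of $\Psi$ is standard. The identity $\Psi(h_1) = \Psi(h_2)$ is equivalent to $h_1 h_2^{-1}$ belonging to the centralizer $Z(E_\ell)$ of $E_\ell$ in $\Hom_+(S^1)$. I would compute $Z(E_\ell)$ by observing that any $g \in Z(E_\ell)$ must permute the $|\ell-1|$ fixed points $p_j = j/(\ell-1)$ of $E_\ell$; being orientation-preserving, it acts on them by cyclic shift, and is then uniquely determined by this choice because the relation $g \circ E_\ell = E_\ell \circ g$ inductively propagates $g$ to the dense set $\bigcup_n E_\ell^{-n}(\mathrm{Fix}(E_\ell))$. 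Conversely, each rotation $R_{j/(\ell-1)}$ lies in $Z(E_\ell)$ by direct check, since $R_\alpha \circ E_\ell = E_\ell \circ R_\alpha$ is equivalent to $(\ell-1)\alpha \in \mathbb{Z}$. Hence $|Z(E_\ell)| = |\ell-1|$, and $\Psi$ is $|\ell-1|$-to-one.

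For the local homeomorphism property, local injectivity is automatic: the $|\ell-1|$ preimages $\{R_{j/(\ell-1)} h\}$ of $\Psi(h)$ are uniformly separated in $d_{C^0}$, so any sufficiently small neighborhood of $h$ is mapped injectively by $\Psi$. The substantive content is openness, which I expect to be the main obstacle. After pre-composing with a fixed $h_0$, openness at $h_0$ reduces to the structural stability statement: \emph{any $f \in CE_\ell(S^1)$ sufficiently $C^0$-close to $E_\ell$ is conjugate to $E_\ell$ by some $\hat h \in \Hom_+(S^1)$ which is $d_{\Hom}$-close to $\mathrm{id}$.} I would prove this by constructing $\hat h$ explicitly. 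Since $f$ is topologically conjugate to $E_\ell$, it has exactly $|\ell-1|$ fixed points; for $f$ close to $E_\ell$ these points $q_0, \ldots, q_{|\ell-1|-1}$ lie close to the $p_j$ in matching cyclic order, and I set $\hat h(q_j) = p_j$. The conjugacy equation $\hat h \circ f = E_\ell \circ \hat h$ then uniquely determines $\hat h$ as an orientation-preserving bijection of the dense set $\bigcup_n f^{-n}(\mathrm{Fix}(f))$ onto $\bigcup_n E_\ell^{-n}(\mathrm{Fix}(E_\ell))$, and continuous order-preserving extension yields $\hat h \in \Hom_+(S^1)$. Because $f$ is itself a degree-$\ell$ covering map (inherited from $E_\ell$ via the topological conjugation), its preimage sets $f^{-n}(\mathrm{Fix}(f))$ depend continuously in Hausdorff distance on $f$ for each fixed $n$. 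Given $\epsilon > 0$, choose $n$ large enough that $E_\ell^{-n}(\mathrm{Fix}(E_\ell))$ is $\epsilon/2$-dense in $S^1$ (possible since $E_\ell^{-n}(0)$ is $\ell^{-n}$-equidistributed); for $f$ sufficiently close to $E_\ell$ the corresponding $f$-preimages are $\epsilon$-close to their $E_\ell$-counterparts, so $\hat h$ is $\epsilon$-close to $\mathrm{id}$ on this $\epsilon$-dense set, hence globally, and closeness of $\hat h^{-1}$ to $\mathrm{id}$ follows automatically, yielding $d_{\Hom}(\hat h, \mathrm{id}) \leq 2\epsilon$.

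Finally, the Baireness of $CE(S^1)$ is a formal consequence: $\Hom_+(S^1)$ is Baire (being a complete metric space under $d_{\Hom}$), the finite cyclic group $G = \{R_{j/(\ell-1)}\}$ acts freely by homeomorphisms, and the local homeomorphism property just established makes $\Psi$ descend to a homeomorphism $\Hom_+(S^1)/G \to CE_\ell(S^1)$; hence each $CE_\ell(S^1)$ is Baire, and $CE(S^1) = \bigsqcup_{|\ell|\geq 2} CE_\ell(S^1)$ is a disjoint union of open Baire subsets, hence Baire.
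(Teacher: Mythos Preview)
Your argument is correct. For Part~(1) and the $|\ell-1|$-to-one count in Part~(2) it matches the paper closely: both isolate the $CE_\ell(S^1)$ via the degree, and both pin down the fibers of $\Psi$ by tracking where the fixed points of $E_\ell$ are sent and then invoking density of periodic points. You package this as computing the centralizer $Z(E_\ell)$, while the paper phrases it as ``specifying which fixed point $p_i$ is mapped to the origin determines $h$''; the content, including the identification of the $|\ell-1|$ preimages with the rigid rotations $R_{j/(\ell-1)}$, is identical.

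The genuine difference is in establishing openness of $\Psi$ (continuity of $\Psi^{-1}$). The paper fixes a neighborhood $\mathcal{V}$ of $\Psi(h)$, chooses a level $k$, and exhibits the open set $\mathcal{U}_0 = \{\tilde h : \tilde h^{-1}(i/\ell^k) \in U_i\}$ with $\Psi(\mathcal{U}_0) \subset \mathcal{V}$ --- which, taken literally, re-establishes forward continuity of $\Psi$ rather than openness. Your structural-stability route supplies precisely the step that is tacit there: the continuous dependence on $f \in CE_\ell(S^1)$ of the finite sets $f^{-n}(\mathrm{Fix}(f))$. By building the conjugacy $\hat h$ explicitly from the order-matched correspondence $f^{-n}(\mathrm{Fix}(f)) \leftrightarrow E_\ell^{-n}(\mathrm{Fix}(E_\ell))$ and using the $\ell^{-n}$-density of the latter, you obtain openness directly and transparently. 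You also spell out the Baire conclusion (via the finite-quotient description, or equivalently the fact that a space locally homeomorphic to a Baire space is itself Baire), which the paper records only as an unargued ``in particular''.
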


\begin{proof}
Suppose that  $f$ and $g$ are two elements of $CE_\ell(S^1)$ of degrees $\ell$ and $m$, say. 
Let $F$ and $G$ be lifts of $f$ and $g$ respectively. If $\ell \neq m$, then $deg(f-g) = \ell - m \neq 0$. It other words, $\{F(1)-G(1)\}-\{F(0)-G(0)\}$ is a nonzero integer. By the mean value theorem, there exists $0\leq x_0 \leq 1$ such that $F(x_0)-G(x_0) = n + 1/2$ for some integer $n$. Consequently $d_C^0(f,g) = 1/2$. This proves item $1$ of the theorem.

To prove item $2$ we fix some $\ell$ with $\vert \ell \vert \geq 2$ and write $E= E_\ell$ to simplify the notation a bit. We also fix some $h$ in $\Hom_+(S^1)$ and consider the map $f= h E h^{-1}$, which belongs to $CE_\ell(S^1)$. 
Since $f$ is conjugated to $E$, it must have exactly $\vert \ell-1 \vert$ fixed points, say $p_1, \ldots, p_{\vert \ell-1 \vert }$, and one of these must be mapped by $h$ to the poitn $0$ in $\mathbb{R} / \mathbb{Z}$. Once specified, the images, under $h$, of all other fixed points are also specified, since they must be mapped to the remaining fixed points of $E$ in a particular order. So are the images of all the fixed points of $f^2$, since they must be mapped into the set of fixed points of $E^2$ respecting a given order. By the same reasoning, the images of all periodic points are determined. It is thus clear by the density of periodic points that to specify which $p_i$ is mapped to the origin really determines $h$. It follows that, given $f \in CE_\ell(S^1)$, there are \emph{at most} $\vert \ell-1 \vert $ choices of $h \in \Hom_+(S^1)$ such that $f = h^{-1}E h$. On the other hand, composing $h$ on the left by a rotation $R$ whose angle is a multiple of $(\ell-1)^{-1
 }$, we obtain a new homeomorphism $h' = R h$ such that $(h')^{-1} E h' = f$. Hence there are \emph{at least} $\vert \ell-1 \vert$ choices of homeomorphisms that conjugate $f$ to $E$. We have therefore shown that there are \emph{precisely} $\vert \ell-1 \vert $ homeomorphisms conjugating a given $f$ to $E$, and that they all differ by left composition of a rigid rotation of angles that are multiples of ${\vert \ell-1 \vert}^{-1}$.

Given an arbitrary homeomorphism $h \in \Hom_+(S^1)$ we choose some neighbourhood $U$ of $h^{-1}(0)$ such that the pre-image, under $h$, of all other fixed points of $E$, do not intersect $\overline{U}$. Then the restriction of $\Psi$ to the set $\mathcal{U} = \{h^{-1}Eh: h^{-1}(0) \in U \}$ is injective. Continuity of $\Psi \vert \mathcal{U}$ is merely a matter of inspection. To see why its inverse is continuous, fix again an arbitrary element $h$ of $\Hom_+(S^1)$ and some neighbourhood $\mathcal{V}$ of $\Psi(h)$. Fix some large $k$ such that $\Psi(\tilde{h})\in \mathcal{V}$ whenever $\tilde{h}^{-1}(p) = h^{-1}(p)$ for every periodic point $p$ of period less than or equal to $k$. By labeling these points $p_1, \ldots, p_{\vert \ell^k-1 \vert}$, where $p_i = h^{-1}(i/\ell^k)$, and by choosing sufficiently small neighbourhoods $U_1, \ldots U_{\vert \ell^k-1 \vert}$ of them, we guarantee that the (open) set 
\[\mathcal{U}_0 = \{\tilde{h} \in \Hom_+(S^1): \tilde{h}^{-1}(i/\ell^k) \in U_i \} \]
is mapped by $\Psi$ into $\mathcal{V}$, proving that the inverse of $\Psi \vert \mathcal{U}$ is continuous.
\end{proof}

 In virtue of Proposition \ref{locally homeomorphic}, it is enough to prove the following: Fix an integer $\ell$ with $\vert \ell \vert \geq 2$ and let $E$ denote the linear expanding circle map of degree $\ell$. Then, given a generic orientation-preserving homeomorphism $h: S^1 \rightarrow S^1$, the map $f = h^{-1}E h$ is wicked, meaning that the sequence $\sum_{k=0}^{n-1}h_*^{-1} E_*^k h_* m$ is dense in $\M_f(S^1)$. But $h_*$ maps $\M_f$ homeomorphically onto $\M_E$, so it is indeed enough to prove that, for generic $h\in \Hom_+(S^1)$, the sequence $\sum_{k=0}^{n-1} E_*^k h_* m$ is dense in $\M_E$. That is what we are going to do throughout the remainder of this section.

We identify the circle with the interval $I^0 = [0, 1)$. We denote by $\mathcal{A}$ the alphabet $\{0, \ldots, \ell-1 \}$ and write $\mathcal{A}^p$ for the set $\mathcal{A} \times \ldots \times \mathcal{A}$ ($p$ times) of words of length $p$. If $\alpha \in \cA^p$ and $\beta \in \cA^q$ we denote by $\alpha \beta \in \cA^{p+q}$ their concatenation. That is, if $\alpha = 010$ and $\beta = 11$, then $\alpha \beta = 01011$. For each $p \in \mathbb{N}$ we partition $I^0$ into $\ell^p$ intervals $\{I_{\alpha}^p : \alpha \in \mathcal{A}^p \}$, where
\[I_{\alpha}^p = \left[ \frac{\alpha}{\ell^p}, \frac{\alpha +1}{\ell^p} \right), \]
treating $\alpha$ as a natural number expressed in base $\ell$. Thus if $\ell = 2$ and $p=3$ we have $I_{000}^3 = [0, \frac{1}{8})$, $I_{001}^3 = [\frac{1}{8}, \frac{2}{8})$ and $I_{111}^3 = [\frac{7}{8}, 1)$.

Every $h \in \Hom_+(S^1)$ gives rise to a sequence of partitions $\cJ^p = \{ J_{\alpha}^p : \alpha \in \cA^p \}$, $p \in \mathbb{N}$, given by $J_{\alpha}^p = h^{-1}(I_{\alpha}^p)$. The sequence $\cJ^p$ (just like $\cI^p$) is \emph{consistent} in the following sense: for every $p, q \in \mathbb{N}$, and every $\alpha \in \cA^p$, we have
\begin{equation}\label{consistent}
 J_{\alpha}^p = \bigcup_{\beta \in \cA^q} J_{\alpha \beta}^{p+q}.
\end{equation}
Note that $h_*m(I_{\alpha}^p) = m (J_{\alpha}^p)$. Moreover, for $q \geq 0$ we have
\begin{equation}\label{measure of push-forward}
E_*^q h_*(I_{\alpha}^p) = m\left( \bigcup_{\beta \in \cA^q} J_{\beta \alpha}^{q+p} \right)
= \sum_{\beta \in \cA^q} m( J_{\beta \alpha}^{q+p}).
\end{equation}

Conversely, given any finite sequence of partitions $\cJ^1, \ldots, \cJ^q$ into, respectively, $\ell, \ldots, \ell^q$ intevals, consistent in the sense of (\ref{consistent}), there exists a homeomorphism $h:S^1 \rightarrow S^1$ (e.g., a piecewise linear one) such that $h(J_{\alpha}^j) = I_{\alpha}^j$ for every $1 \leq j \leq q$ and $\alpha \in \cA^j$.

To prove Theorem \ref{theoC}, we fix a countable base $\{V_i\}_{i \in \mathbb{N}}$ of the weak*-topology on $\cM(S^1)$ where each $V_i$ is of the form
\begin{equation}
 V_i = \left\{ \mu\in \M(S^1): \left\vert \int \varphi_i^j \ d\mu - \int \varphi_i^j \ d\nu_i \right\vert < \epsilon_i^j \
\forall 1\leq j \leq k_i \right\}
\end{equation}
for some $\nu_i \in \M(S^1)$, $k_i \in \mathbb{N}$, continuous $\varphi_i^j:S^1 \rightarrow \mathbb{R}$, numbers $\epsilon_i^j >0$, and consider the (open) sets 
\begin{equation}
\cU_{i,n}=\left\{h \in \Hom_+(S^1): \frac{1}{n} \sum_{k=0}^{n-1} E_*^k h_*m \in V_i \right\}.
\end{equation} 
\begin{lemma}\label{Uin is dense}
 Suppose $V_i \cap \cM_E(S^1) \neq \emptyset$ and let $m$ be any integer. Then $\bigcup_{n\geq m} \cU_{i,n}$ is dense in $\Hom_+(S^1)$.
\end{lemma}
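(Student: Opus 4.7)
The plan is to reduce the statement to approximating a suitable periodic orbit measure $\nu_P \in V_i$ by Ces\`aro averages of $\tilde h_*$-pushforwards of Lebesgue, and then to realize this approximation by constructing $\tilde h$ close to $h$ so that $\tilde h_* m$ concentrates on tiny cylinders whose $E$-orbits track the periodic orbit for many iterations.

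First, since periodic orbit measures are weak-$*$ dense in $\M_E(S^1)$ (by the specification property of the expanding map $E$) and $V_i$ is open, I would fix a periodic point $x_0 = 0.\overline{w}$ of $E$ of some period $P$, with $w \in \mathcal{A}^P$, such that $\nu_P = \frac{1}{P}\sum_{j=0}^{P-1}\delta_{E^j x_0}$ lies in $V_i$. Using uniform continuity of the finitely many test functions $\varphi_i^j$, I would fix $\eta, \delta > 0$ small enough that any probability placing mass $\geq 1-\eta$ on $\delta$-neighborhoods of the orbit $\{E^j x_0\}_{j=0}^{P-1}$ with weights close to $1/P$ automatically lies in $V_i$. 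Then, given $h \in \Hom_+(S^1)$ and a $C^0$-neighborhood of radius $\epsilon_0 > 0$, I would choose $q \geq P$ with $\ell^{-q} < \epsilon_0$ and demand that $\tilde h^{-1}(I_\gamma^q) = J_\gamma^q$ for every $\gamma \in \mathcal{A}^q$, which automatically gives $d_{C^0}(\tilde h, h) < \epsilon_0$.

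Within each $J_\gamma^q$ the map $\tilde h$ is still free, and I would define it piecewise linearly so that a sub-interval of $J_\gamma^q$ of relative Lebesgue measure $\geq 1-\epsilon$ maps onto the deep cylinder $I_{\gamma w^K}^{q+KP}$, for a large integer $K$ to be chosen. Consequently $\tilde h_* m$ deposits at least $(1-\epsilon) m(J_\gamma^q)$ of mass inside $I_{\gamma w^K}^{q+KP}$, totalling at least $1-\epsilon$ of its mass on $\bigcup_\gamma I_{\gamma w^K}^{q+KP}$. The crucial observation is that $E^q$ sends each $I_{\gamma w^K}^{q+KP}$ onto the common cylinder $I_{w^K}^{KP}$, so $E_*^q \tilde h_* m$ has mass $\geq 1-\epsilon$ concentrated on $I_{w^K}^{KP}$. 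For $0 \leq k' \leq KP - L$, with $L = \lceil \log_\ell(1/\delta) \rceil$, further iteration by $E^{k'}$ sends this $(1-\epsilon)$-mass into $I_{\sigma^{k'}(w^K)}^{KP-k'}$, a cylinder of diameter $\leq \delta$ containing $E^{k' \bmod P} x_0$. Hence $\int \varphi_i^j \, d(E_*^{q+k'}\tilde h_* m)$ approximates $\varphi_i^j(E^{k' \bmod P} x_0)$ up to an error $O(\omega_{\varphi_i^j}(\delta) + \epsilon \|\varphi_i^j\|_\infty)$. Averaging over $k' = 0, \ldots, n-q-1$ with $n \geq m$ chosen so that $q \ll n \leq q + KP - L$, one obtains $\int \varphi_i^j \, d(\frac{1}{n}\sum_{k=0}^{n-1} E_*^k \tilde h_* m) = \int \varphi_i^j \, d\nu_P + O(\eta)$, placing $\tilde h$ into $\cU_{i,n}$ for this $n$.

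The main obstacle is orchestrating the parameters consistently: one fixes $\nu_P$ (and hence $P, w$); then $\eta$ and $\delta$ from the $\varphi_i^j$; then $q$ from $\epsilon_0$; then $\epsilon$ from $\eta$; then $n \geq m$ large enough to dominate the $q/n$ and $P/n$ errors coming from the initial iterates and from incomplete periods; and finally $K$ large enough so that $KP \geq n - q + L$. All these inequalities are mutually consistent because $K$ is chosen last. That the piecewise linear construction of $\tilde h$ on each $J_\gamma^q$ is legitimate is immediate, since $I_{\gamma w^K}^{q+KP}$ is a sub-interval of $I_\gamma^q$ and both $J_\gamma^q$ and its designated sub-interval are arcs of $S^1$.
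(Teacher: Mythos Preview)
Your argument is correct, but it takes a different route from the paper's. The paper picks an arbitrary $\mu\in V_i\cap\cM_E(S^1)$ (not necessarily periodic) and, after fixing $n_0$ with $\ell^{-n_0}<\epsilon$, defines the finer partitions directly by
\[
m(J_{\beta\gamma}^k)=m(J_\beta^{n_0})\,\mu(I_\gamma^{k-n_0}),\qquad \beta\in\cA^{n_0},\ \gamma\in\cA^{k-n_0},
\]
so that, using $E$-invariance of $\mu$, one gets the exact identity $E_*^k h'_*m(I_\alpha^p)=\mu(I_\alpha^p)$ for every $\alpha\in\cA^p$ and every $n_0\le k\le n-1$. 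This places $E_*^k h'_*m$ in $V_i$ for an entire block of iterates without any approximation, and the Ces\`aro average then lands in $V_i$ once $n$ dominates $n_0$.

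Your version instead first invokes density of periodic-orbit measures to reduce to $\nu_P$, then builds $\tilde h$ by funnelling almost all the mass of each $J_\gamma^q$ into the single deep cylinder $I_{\gamma w^K}$, so that successive pushforwards shadow the periodic orbit. This is more geometric and perhaps easier to visualise, but it costs you the extra approximation step (periodic measures dense in $\cM_E$), a careful $\delta/\eta/\epsilon$ bookkeeping, and the parameter cascade you describe. The paper's construction avoids all of that by distributing mass according to $\mu$ across \emph{all} sub-cylinders rather than concentrating it on one; the invariance of $\mu$ then does the work that your orbit-tracking does. Both arguments are valid; the paper's is shorter and yields exact cylinder matching, while yours gives a more explicit piecewise-linear $\tilde h$.
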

Once the above claim is proved, the proof of Theorem \ref{theoC} follows by observing that
$\frac{1}{n} \sum_{k=0}^{n-1} E_*^k h_*m$ accumulates on the whole of $\cM_E(S^1)$ if and only if
\begin{equation}
 h \in \bigcap_{ \substack{i \in \mathbb{N} \text{ such that } \\ V_i \cap \cM_E(S^1) \neq \emptyset}} \ \bigcap_{m \geq 0} \  \bigcup_{n \geq m} \cU_{i,n}.
\end{equation}

\begin{proof}[Proof of Lemma \ref{Uin is dense}]
Fix $h \in \Hom_+(S^1)$, $V_i$ such that $V_i \cap \cM_E (S^1) \neq \emptyset$ and $\epsilon>0$. The goal is to prove that if $n_0 \geq 1$ is sufficiently large, then for every $n > n_0$ there is some $h' \in \Hom_+(S^1)$ with $d(h', h) <\epsilon$, such that
\begin{equation} \label{inside V}
 E_*^k {h'}_* m \in V_i \quad \forall n_0 \leq k \leq n-1.
\end{equation}
For then
\begin{equation}
 \frac{1}{n}\sum_{k=0}^{n-1} E_*^k {h'}_*m
= \frac{1}{n}\sum_{k=0}^{n_0-1} E_*^k {h'}_*m + \frac{1}{n}\sum_{k=n_0}^{n-1} E_*^k {h'}_*m \in V_i
\end{equation}
provided that $n$ is sufficiently large in comparison to $n_0$. To this end, let $n_0$ be any integer satisfying
$\ell^{-n_0} < \epsilon$. Next pick some $\mu \in V_i \cap \cM_E(S^1)$ and choose $p$ large enough so that $\nu \in V_i$ whenever $\nu$ is a measure satisfying
\begin{equation}
 \nu(I_{\alpha}^p) = \mu(I_{\alpha}^p) \quad \forall \alpha \in \cA^p.
\end{equation}

We define a consistent family of partitions $\{\cJ^k \}_{k=1}^{n-1}$ in the following manner: For $1\leq k \leq n_0$ and $\alpha \in \cA^k$ let $J_{\alpha}^k = h^{-1}(I_{\alpha}^k)$. For $n_0 < k \leq n-1$ and $\alpha \in \cA^k$ write $\alpha = \beta \gamma$ with $\beta \in \cA^{n_0}$ and $\gamma \in \cA^{k-n_0}$. Define $\cJ^k$ in such a way that
\begin{equation}\label{defn of J}
 m(J_{\beta \gamma}^k ) = m(J_{\beta}^{n_0}) \mu(I_{\gamma}^{k-{n_0}}).
\end{equation}
Let $h':S^1 \rightarrow S^1$ be a homeomorphism such that $h'(J_{\alpha}^k) = I_{\alpha}^k$ for every $\alpha \in \cA^k$, $1\leq k \leq n-1$. Then $d(h', h)< \epsilon$ since $h$ and $h'$ agree on each $J_{\alpha}^{n_0}$. Moreover (\ref{inside V}) holds for such a choice of $h'$. Indeed, when $n_0 \leq k\leq n-1$ we may write $\beta \in \cA^k$ as $\omega \tau \in \cA^{n_0} \times \cA^{k-n_0}$. Thus combining (\ref{measure of push-forward}) and (\ref{defn of J}) we have
\begin{align}
 E_*^k h_* m(I_{\alpha}^p) & = \sum_{\omega \in \cA^{n_0}} \sum_{\tau \in \cA^{k-n_0}}
m( J_{\omega \tau \alpha}^{k+p})\\ & = \sum_{\omega \in \cA^{n_0}} \sum_{\tau \in \cA^{k-n_0}}
m(J_{\omega}^{n_0}) \mu( I_{\tau \alpha}^{k-n_0 + p}) \\ & = \sum_{\tau \in \cA^{k-n_0}}
\mu( I_{\tau \alpha}^{k-n_0 + p})  = \mu ( E^{-(k-n_0)}(I_{\alpha}^p)) = \mu(I_{\alpha}^p).
\end{align}
By our choice of $p$ this implies that $E_*^k {h'}_* m \in V_i$ as required.
\end{proof}

\bibliographystyle{alphanum}
\bibliography{bibdoflavio}{}

{\bf Fl\'avio Abdenur} (flavio.abdenur@gmail.com) Departamento de Matem\'atica, PUC-Rio de Janeiro, 22453-900, Rio de Janeiro - RJ, Brazil.

\vskip 12pt

{\bf Martin Andersson} (martin@mat.uff.br) Universidade Federal Fluminense (GMA), 24020-140, Niter\'oi - RJ, Brazil. 

\end{document}